\documentclass[reqno]{amsart}
\usepackage{amsmath}
\usepackage{amsfonts}
\usepackage{amssymb}
\usepackage{txfonts}
\usepackage[T1]{fontenc}
\newtheorem{theorem}{Theorem}
\newtheorem{corollary}{Corollary}
\newtheorem{definition}{Definition}
\newtheorem{example}{Example}
\newtheorem{lemma}{Lemma}

\newtheorem{remark}{Remark}

\DeclareMathOperator{\dom}{dom}
\DeclareMathOperator{\coker}{coker}
\DeclareMathOperator{\ev}{ev}
\DeclareMathOperator{\im}{Im}

\DeclareMathOperator{\fix}{Fix}
\DeclareMathOperator{\co}{co}
\DeclareMathOperator{\ind}{Ind}

\newcommand{\map}{\multimap}
\newcommand{\<}{\leqslant}

\newcommand{\R}[1]{\varmathbb{R}^{#1}}

\begin{document}
\title[Guiding potential method for differential inclusions with nonlocal conditions]{Guiding potential method for differential inclusions with nonlocal conditions}
\author{Rados\l aw Pietkun}
\subjclass[2010]{34A60, 34B10, 47H11}
\keywords{guiding potential, differential inclusion, topological degree, Fredholm operator, nonlocal initial condition}
\address{Toru\'n, Poland}
\email{rpietkun@pm.me}

\begin{abstract}
The existence of solutions of some nonlocal initial value problems for differential inclusions is established. The guiding potential method is used and the topological degree theory for admissible multivalued vector fields is applied. Some conclusions concerning compactness of the solution set have been drawn. 
\end{abstract}
\maketitle

\section{Introduction}
\par The aim of this paper is to formulate theorems about the existence of absolutely continuous solutions of the following nonlocal Cauchy problem
\begin{equation}\label{nonlocal}
\begin{cases}
\dot{x}(t)\in F(t,x(t)),&\mbox{a.e. }t\in I=[0,T],\\
x(0)=g(x),
\end{cases}
\end{equation}
where $F\colon I\times\R{N}\map\R{N}$ is a multivalued map and $g\colon C(I,\R{N})\to\R{N}$. Proofs of these theorems rely on the use of $C^1$-guiding potential. This method, whose base was laid by M. A. Krasnosel'skii and A. I. Perov, generically employed to differential equations and subsequently expanded to inclusions, has demonstrated its effectiveness to the study of periodic problems (see, e.g. \cite{blasi,gorn,zabreiko} and especially \cite{obu}, which contains a lot of references to subject literature). However, periodic condition constitutes just a single case of a much wider class of so called nonlocal initial conditions. The consideration for nonlocal initial condition, given by some $g$, is stimulated by the observation that this type of conditions is more realistic than usual ones in treating physical problems. \par Let us mention here three important cases covered by our results:
\begin{itemize}
\item $g(x)=-x(T)$ (anti-periodicity condition);
\item $g(x)=\sum_{i=1}^n\alpha_ix(t_i)$, where $\sum_{i=1}^n|\alpha_i|\<1$, $\sum_{i=1}^n\alpha_i\neq 1$ and $0<t_1<t_2<\ldots<t_n\<T$ (multi-point discrete mean condition);
\item $g(x)=\frac{1}{T}\int_0^Th(x(t))\,dt$, with $h\colon\R{N}\to\R{N}$ such that $|h(x)|\<|x|$ (mean value condition).
\end{itemize}
\par In each of the above cases we are dealing with the situation when the function $g$ possesses a sublinear growth, and strictly speaking satisfies the condition: $|g(x)|\<||x||$ for $x\in C(I,\R{N})$. This means in particular that the existence of solutions to the problem \eqref{nonlocal} for such g's can not be resolved by means of the fixed point theorem (see Theorem \ref{th9}. below). Therefore arises a need for a different approach to boundary problems of this type. Such an approach is presented in Theorems \ref{th1}., \ref{th2}. and \ref{th3}., which exploit the existence of a smooth guiding potential $V$ for the multivalued right-hand side of the differential inclusion $\dot{x}(t)\in F(t,x(t))$. Next, we try to abstract properties of the map appearing in the nonlocal condition by formulating Theorem \ref{th4}., which generalizes the previous considerations. \par In Theorem \ref{th6}. we give an example of the property of $g$, which also guarantees the existence of solutions to the problem \eqref{nonlocal}, but that excludes same time sublinear growth \eqref{sublinear} with a constant $c\<1$.\par We end this paper by describing the topological structure of the set of solutions to \eqref{nonlocal} in Theorem \ref{compact}., which gives conditions for the compactness of this set.

\section{Preliminaries}
Let $X$ and $Z$ be a Banach space. An open ball with center $x\in X$ (resp. zero) and radius $r>0$ is denoted by $B(x,r)$ ($B(r)$). Symbol $\overline{B}(r)$ stands for a closed ball. If $A\subset X$, then $\overline{A}$ denotes the closure of $A$, $\partial A$ the boundary of $A$ and $\co A$ the convex hull of $A$. The inner product in $\R{N}$ represents $\langle\cdot,\cdot\rangle$. 
\par For $I\subset\R{}$, $(C(I,\R{N}),||\cdot||)$ is the Banach space of continuous maps $I\to \R{N}$ equipped with the maximum norm and $AC(I,\R{N})$ is the subspace of absolutely continuous functions. By $(L^1(I,\R{N}),||\cdot||_1)$ we mean the Banach space of all Lebesgue integrable maps.
\par A multivalued map $F\colon X\map Z$ assigns to any $x \in X$ a nonempty subset $F(x)\subset Z$. The set of all fixed points of the multivalued (or univalent) map $F$ is denoted by $\fix(F)$. Recall that a map $F\colon X\map Z$ with compact values is upper semicontinuous iff given a sequence $(x_n,y_n)$ in the graph of $F$ with $x_n\to x$ in $X$, there is a subsequence $y_{k_n}\to y\in F(x)$. If the image $F(X)$ is relatively compact in $Z$, then we say that $F$ is a compact multivalued map. A multimap $F\colon I\map\R{N}$ is called measurable, if $\{t\in I\colon F(t)\subset A\}$ belongs to the Lebesgue $\sigma$-field of $I$ for every closed $A\subset\R{N}$. 
\par A set-valued map $F\colon X\map Z$ is admissible (compare \cite[Def.40.1]{gorn2}) if there is a metric space $Y$ and two continuous functions $p\colon X\to Y$, $q\colon Y\to Z$ from which $p$ is a Vietoris map such that $F(x)=q(p^{-1}(x))$ for every $x\in X$. It turns out that every acyclic multivalued map, i.e. an usc multimap with compact acyclic values, is admissible. In particular, every usc multivalued map with compact convex values is admissible.\par Let ${\mathcal M}$ be the set of triples $(Id-F,\Omega,y)$ such that $\Omega\subset X$ is open bounded, $Id$ is the identity, $F\colon\overline{\Omega}\map X$ is a compact usc multimap with closed convex values, and $y\not\in(Id-F)(\partial\Omega)$. Then it is possible to define, using approximation methods for multivalued maps, a unique topological degree function $\deg\colon{\mathcal M}\to\varmathbb{Z}$ (see \cite{deimling,gorn2,obu} for details). This degree inherits directly all the basic properties of the Leray-Schauder degree, among others existence, localization, normality, additivity, homotopy invariance and contractivity.\par Let $L\colon\dom L\subset X\to Z$ be a linear Fredholm operator of zero index such that $\im L\subset Z$ is a closed subspace. Consider continuous linear idempotent projections $P\colon X\to X$ and $Q\colon Z\to Z$ such that $\im P=\ker L$, $\im L=\ker Q$. By the symbol $L_P$ we denote the restriction of the operator $L$ on $\dom L\cap\ker P$ and by $K_{P,Q}\colon Z\to\dom L\cap\ker P$ the operator given by the relation $K_{P,Q}(z)=L_P^{-1}(Id-Q)(z)$. Since $\dim\ker L=\dim\coker L<\infty$ we may choose a linear homeomorphism $\Phi\colon\im Q\to\im P$.
\par The proofs of main results of this paper will be based in particular on the following coincidence point theorem. In the wake of \cite{maw} we should specify it as the Continuation Theorem.
\begin{theorem}\label{conth}\cite[Lemma 13.1.]{deimling} Let $X$ and $Z$ be real Banach spaces, $L\colon\dom L\subset X\to Z$ be Fredholm operator of index zero and with closed graph, $\Omega\subset X$ be open bounded and $N\colon\overline{\Omega}\map Z$ be such that  $QN$ and $K_{P,Q}N$ are compact usc multimaps with compact convex values. Assume also that
\begin{itemize}
\item[(a)] $Lx\not\in\lambda N(x)$ for all $\lambda\in(0,1)$ and $x\in\dom L\cap\partial\Omega$,
\item[(b)] $0\not\in QN(x)$ on $\ker L\cap\partial\Omega$ and $\deg(\Phi QN,\ker L\cap\Omega,0)\neq 0$.
\end{itemize}   
Then $\deg(Id-P-(\Phi Q+K_{P,Q})N,\Omega,0)\neq 0$. In particular, $Lx\in N(x)$ has a solution in $\Omega$.
\end{theorem}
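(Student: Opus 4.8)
The plan is to convert the coincidence problem $Lx\in N(x)$ into a fixed point problem for a compact admissible multifield on $\overline{\Omega}$ and then to evaluate the degree of that field by deforming it onto $\ker L$, where hypothesis (b) takes over. The starting point is the purely algebraic equivalence on which Mawhin's continuation principle rests: for $x\in\dom L$ one has $Lx\in N(x)$ if and only if $x\in Px+(\Phi Q+K_{P,Q})N(x)$. I would prove this by a direct computation using that $QL=0$, $\im P=\ker L$, $\im L=\ker Q$, that $K_{P,Q}$ takes values in $\dom L\cap\ker P$ and that $\Phi$ is injective: applying $P$ to a representation $x=Px+\Phi Qy+K_{P,Q}y$ with $y\in N(x)$ forces $\Phi Qy=0$, hence $Qy=0$, whence $x-Px=K_{P,Q}y=L_P^{-1}y$ and $Lx=L(x-Px)=y\in N(x)$; the reverse implication is analogous. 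Consequently the solvability of $Lx\in N(x)$ in $\Omega$ is equivalent to the existence of a fixed point in $\Omega$ of $\Psi:=P+(\Phi Q+K_{P,Q})N$.

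Next I would observe that $\Psi\colon\overline{\Omega}\map X$ is a compact usc multimap with compact convex values: $P$ is a finite-rank continuous projection (hence compact), $\Phi QN$ takes values in the finite-dimensional space $\ker L$ and is compact usc with compact convex values because $\Phi$ is a linear homeomorphism and $QN$ has these properties, and $K_{P,Q}N$ is compact usc with compact convex values by assumption; the sum inherits these properties. Thus $Id-\Psi$ is an admissible vector field of the class $\mathcal M$ and $\deg(Id-\Psi,\Omega,0)$ is defined as soon as $0\notin(Id-\Psi)(\partial\Omega)$. If $0\in(Id-\Psi)(\partial\Omega)$ then, by the equivalence above, $Lx\in N(x)$ already has a solution on $\partial\Omega$ and we are done; so we may assume the degree is well defined.

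To compute it I would use the compact admissible homotopy $H\colon[0,1]\times\overline{\Omega}\map X$, $H(\lambda,x)=x-Px-\Phi QN(x)-\lambda K_{P,Q}N(x)$, which connects $Id-\Psi$ (at $\lambda=1$) to $Id-P-\Phi QN$ (at $\lambda=0$). The crucial point is that $H$ is admissible, i.e. $0\notin H(\lambda,x)$ for every $x\in\partial\Omega$ and $\lambda\in[0,1]$. Suppose $0\in H(\lambda,x)$ with $y\in N(x)$ realizing the inclusion. Applying $P$ gives $\Phi Qy=0$, hence $Qy=0$; for $\lambda\in(0,1]$ this yields as above $x-Px=\lambda L_P^{-1}y$ and therefore $Lx=\lambda y\in\lambda N(x)$, which contradicts (a) when $\lambda\in(0,1)$ and produces a boundary solution (already excluded) when $\lambda=1$; for $\lambda=0$ it gives $x\in\ker L\cap\partial\Omega$ with $0=Qy\in QN(x)$, contradicting the first part of (b). Hence $H$ is admissible and the homotopy invariance of the degree yields $\deg(Id-\Psi,\Omega,0)=\deg(Id-P-\Phi QN,\Omega,0)$.

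Finally, since $Px+\Phi QN(x)$ lies in the finite-dimensional subspace $\ker L$, the contractivity (reduction) property of the degree reduces the last quantity to a degree computed in $\ker L$; on $\ker L$ one has $Px=x$, so $(Id-P-\Phi QN)|_{\ker L}=-\Phi QN$, giving $\deg(Id-P-\Phi QN,\Omega,0)=\deg(-\Phi QN,\ker L\cap\Omega,0)=(-1)^{\dim\ker L}\deg(\Phi QN,\ker L\cap\Omega,0)$, which is nonzero by the second part of (b). Thus $\deg(Id-\Psi,\Omega,0)\neq0$, and by the existence property there is $x\in\Omega$ with $x\in\Psi(x)$, i.e. $Lx\in N(x)$. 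I expect the main obstacle to be the verification that $H$ is a genuine admissible compact homotopy with no zeros on $\partial\Omega$ uniformly in $\lambda$ — the endpoint $\lambda=1$ has to be dealt with by the boundary-coincidence dichotomy rather than by (a) — together with the bookkeeping needed to guarantee that the various sums of multimaps remain compact, usc and convex-valued, so that the multivalued degree and its homotopy and contractivity properties genuinely apply.
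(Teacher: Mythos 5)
Your argument is correct, but note that the paper itself offers no proof of this statement: it is quoted from Deimling (Lemma 13.1) and used as a black box, so there is no internal proof to compare against. What you have written is essentially the standard Gaines--Mawhin continuation argument transplanted to the multivalued setting, and it matches the proof in the cited sources: the algebraic equivalence $Lx\in N(x)\Leftrightarrow x\in Px+(\Phi Q+K_{P,Q})N(x)$, the homotopy $(\Phi Q+\lambda K_{P,Q})N$ shrinking the $K_{P,Q}N$ part away, and the reduction of $\deg(Id-P-\Phi QN,\Omega,0)$ to $\deg(-\Phi QN,\ker L\cap\Omega,0)$ via the contractivity property, where hypothesis (b) finishes the job. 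Two small points deserve attention. First, the homotopy must be $x\mapsto Px+\{\Phi Qy+\lambda K_{P,Q}y\colon y\in N(x)\}$, i.e.\ the \emph{same} selection $y$ must appear in both terms; your displayed formula $\Phi QN(x)+\lambda K_{P,Q}N(x)$ reads as a Minkowski sum, for which the boundary estimate would fail, but your verification (``with $y\in N(x)$ realizing the inclusion'') uses the correct diagonal version, so this is only a notational slip. Second, your dichotomy at $\lambda=1$ is the right way to handle the fact that (a) is only assumed for $\lambda\in(0,1)$: if a coincidence point sits on $\partial\Omega$ the degree is undefined, and the honest conclusion is then a solution in $\overline{\Omega}$ (which is how Deimling states the lemma); the version transcribed in the paper, asserting both a nonzero degree and a solution in the open set $\Omega$, implicitly assumes there are no boundary coincidences. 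Neither point is a gap in your reasoning.
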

We are able to place considered boundary value problem \eqref{nonlocal} in the context of Theorem \ref{conth}. and the above introduced general framework. Let $X:=C(I,\R{N})$, $Z:=L^1(I,\R{N})\times\R{N}$, $\dom L:=AC(I,\R{N})$; $L\colon\dom L\subset X\to Z$ be such that $Lx:=(\dot{x},0)$. Then $\ker L=i(\R{N})$, where $i\colon\R{N}\hookrightarrow C(I,\R{N})$ is defined by $i(x_0)(t)=x_0$, $\im L=L^1(I,\R{N})\times \{0\}$ and $\coker L\approx\R{N}$, i.e. $L$ is a Fredholm mapping of index zero. Consider continuous linear operators $P\colon X\to X$ and $Q\colon Z\to Z$ such that $P(x)(t)=x(0)$ and $Q((y,v))=(0,v)$. It is clear that $(P,Q)$ is an exact pair of idempotent projections with respect to $L$. Define operator $N_F\colon X\to Z$ by \[N_F(x):=\left\{f\in L^1(I,\R{N})\colon f(t)\in F(t,x(t))\mbox{ for a.a. }t\in I\right\}\times\left\{\gamma(x)\right\},\] where $\gamma=\ev_0-g$ ($\ev_t\colon C(I,\R{N})\to\R{N}$ stands for the evaluation at point $t\in I$). It is clear that the nonlocal Cauchy problem \eqref{nonlocal} is equivalent to the operator inclusion $Lx\in N_F(x)$.
\par For a given potential $V\in C^1(\R{N},\R{})$ we define the induced vector field $W_V\colon\R{N}\to\R{N}$ by the formula
\[W_V(x)=\begin{cases}
\nabla V(x),&\mbox{if }|\nabla V(x)|\<1,\\
\frac{\nabla V(x)}{|\nabla V(x)|},&\mbox{if }|\nabla V(x)|>1.
\end{cases}\]
Then $W_V$ is continuous and bounded. Let $N_{W_V}\colon X\to Z$ be such that  \[N_{W_V}(x)=(W_V(x(\cdot)),\gamma(x)).\] In this particular case we have $K_{P,Q}N_{W_V}(x)(t)=\int_0^tW_V(x(s))\,ds$. Therefore $N_{W_V}$ is $L$-compact (see \cite{maw} for more details).\par Throughout the rest of this paper $\Phi\colon\im Q\to\ker L$ will denote a fixed linear homeomorphism, given by $\Phi((0,x_0))=i(x_0)$. 
\par Regarding the guiding potential evoked in the title, we are obliged to introduce some auxiliary concepts.
We say that potential $V\colon\R{N}\to\R{}$ is {\it monotone}, if \[\left[|x|\<|y|\Rightarrow V(x)\<V(y)\right].\] Observe that, if potential $V$ is monotone, then $V$ satisfies $[|x|=|y|\Rightarrow V(x)=V(y)]$. In particular, $V$ is also even. The function $V\colon\R{N}\to\R{}$ is {\it coercive}, if $\lim\limits_{|x|\to+\infty}V(x)=+\infty$.
\begin{example}\mbox{ }
\begin{itemize}
\item[(i)] The classical Krasnosel'skii-Perov potential $V(x)=\frac{1}{2}|x|^2$ is monotone coercive and continuously differentiable.
\item[(ii)] Suppose $f\in C^1(\R{},\R{})$ is nondecreasing and there is $R>0$ such that $f(x)\geqslant x$ for $x\geqslant R$. Then the mapping $V(x)=f\left(\frac{1}{2}|x|^2\right)$ is a monotone coercive potential of $\,C^1$-class.
\end{itemize}
\end{example}
\par The method of guiding potential exploits also another generic notion:
\begin{definition}
A continuously differentiable function $V\colon\R{N}\to\R{}$ is called a nonsingular potential, if
\begin{equation}\label{nonsingular}
\exists\, R>0\;\forall\, |x|\geqslant R,\;\;\;\nabla V(x)\neq 0.
\end{equation}
\end{definition} 
\par The symbol $\langle A,B\rangle^{-}$ (or $\langle A,B\rangle^{+}$) we use below stands for the lower inner product (upper inner product) of nonempty compact subsets of $\R{n}$, i.e. 
\[\langle A,B\rangle^{-}=\inf\,\{\langle a,b\rangle\colon a\in A,b\in B\},\;\;\;\langle A,B\rangle^{+}=\sup\,\{\langle a,b\rangle\colon a\in A,b\in B\}.\] 
\par Let us specify the concept of a guiding potential for the multimap $F\colon I\times\R{N}\map\R{N}$ (compare \cite{blasi,gorn}):
\begin{definition}
Suppose that $V\in C^1(\R{N},\R{})$ is nonsingular. We will call the mapping $V$ 
\begin{itemize}
\item[] a weakly positively guiding potential for multimap $F$, if
\begin{equation}\label{weak} 
\exists\, R>0\;\forall\, |x|\geqslant R\;\forall\, t\in I\;\exists\, y\in F(t,x),\;\;\;\langle\nabla V(x),y\rangle\geqslant 0,
\end{equation}
\item[] a weakly negatively guiding potential for multimap $F$, if
\begin{equation}\label{negativ} 
\exists\, R>0\;\forall\, |x|\geqslant R\;\forall\, t\in I\;\exists\, y\in F(t,x),\;\;\;\langle\nabla V(x),y\rangle\leqslant 0,
\end{equation}
\item[] a strictly negatively guiding potential for multimap $F$, if
\begin{equation}\label{strictneg} 
\exists\, R>0\;\forall\, |x|\geqslant R\;\forall\, t\in I\;\;\;\langle\nabla V(x),F(t,x)\rangle^+<0.
\end{equation}
\end{itemize}
\end{definition}
In what follows we shall permanently refer to a certain initial set of assumptions regarding the multimap $F$, which concretizes the following definition:
\begin{definition}
We will say that a nonempty convex compact valued map $F\colon I\times\R{N}\map\R{N}$ is a Carath\'{e}odory map, if the following conditions are satisfied:
\begin{itemize}
\item[$(F_1)$] the multimap $t\mapsto F(t,x)$ is measurable for every fixed $x\in\R{N}$, 
\item[$(F_2)$] the multimap $x\mapsto F(t,x)$ is upper semicontinuous for $t\in I$ a.e.
\item[$(F_3)$] $\sup\{|y|\colon y\in F(t,x)\}\<\mu(t)(1+|x|)$ for every $(t,x)\in I\times\R{N}$, where $\mu\in L^1(I,\R{})$.
\end{itemize}
\end{definition}

\section{Main Results}
The first assertion illustrates why referring to guiding potential method is superfluous, if the function $g\colon C(I,\R{N})\to\R{N}$ satisfies sufficiently strong assumption regarding sublinear growth.
\begin{theorem}\label{th9}
Let $F\colon I\times\R{N}\map\R{N}$ be a Carath\'eodory map. Let $g\colon C(I,\R{N})\to\R{N}$ be a continuous mapping with contractive sublinear growth, i.e. 
\begin{equation}\label{sublinear}
\exists\,c\in(0,1)\;\exists\,d>0\;\forall\,x\in C(I,\R{N})\;\;\;|g(x)|\<c||x||+d.
\end{equation}
Then the nonlocal initial value problem \eqref{nonlocal} possesses at least one solution.
\end{theorem}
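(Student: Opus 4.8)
The plan is to recast \eqref{nonlocal} as the coincidence problem $Lx\in N_F(x)$ in the framework already fixed in the preliminaries and to verify the hypotheses of the Continuation Theorem (Theorem \ref{conth}.) on a ball $\Omega=B(r)\subset C(I,\R{N})$ with $r$ chosen large at the end. First I would record that $N_F$ has the regularity demanded there. The second coordinate $x\mapsto\gamma(x)=\ev_0(x)-g(x)$ is single valued and continuous, with range bounded (hence relatively compact) on $\overline{B}(r)$, so $QN_F$ is a compact usc map with compact convex values. The multivalued Nemytskii part $N_F^1(x)=\{f\in L^1(I,\R{N})\colon f(t)\in F(t,x(t))\text{ a.e.}\}$ is, by $(F_1)$--$(F_2)$, nonempty and usc with convex compact values, and by $(F_3)$ the family $x\mapsto\{\int_0^\cdot f(s)\,ds\colon f\in N_F^1(x)\}=K_{P,Q}N_F(x)$ is bounded and equicontinuous, hence relatively compact by Arzel\`a--Ascoli. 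Thus $QN_F$ and $K_{P,Q}N_F$ satisfy the compactness required in Theorem \ref{conth}. These verifications are routine.

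The clean point, and the only place where the restriction $c<1$ is genuinely exploited, is condition (b). On $\ker L=i(\R{N})$ one computes $\Phi QN_F(i\xi)=i(\xi-g(i\xi))$, so that under the identification $\ker L\cong\R{N}$ furnished by $i$ the field reduces to $h(\xi)=\xi-g(i\xi)$ on $\R{N}$. Since $\|i\xi\|=|\xi|$, the growth hypothesis gives $|g(i\xi)|\<c|\xi|+d$, whence for $|\xi|=r>d/(1-c)$ one has $|h(\xi)|\geqslant|\xi|-|g(i\xi)|\geqslant(1-c)r-d>0$. The same estimate shows the affine homotopy $(\xi,s)\mapsto\xi-s\,g(i\xi)$ stays nonzero on $\partial B(r)$, so that $\deg(\Phi QN_F,\ker L\cap B(r),0)=\deg(Id,B(r),0)=1\neq0$, and simultaneously $0\notin QN_F$ on $\ker L\cap\partial B(r)$. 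Here $c<1$ is exactly what prevents the linear part of $g$ from swallowing the identity.

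It remains to secure condition (a): that no $x\in\dom L\cap\partial B(r)$ solves $Lx\in\lambda N_F(x)$ for $\lambda\in(0,1)$. Such an $x$ satisfies $x(0)=g(x)$ and $\dot{x}=\lambda f$ with $f(t)\in F(t,x(t))$ a.e., so $x(t)=g(x)+\lambda\int_0^t f(s)\,ds$. Estimating with $(F_3)$ and $\lambda\<1$ gives $|x(t)|\<|g(x)|+\int_0^t\mu(s)(1+|x(s)|)\,ds$, and Gronwall's inequality yields $\|x\|\<(1+|g(x)|)e^{\|\mu\|_1}-1$. Inserting $|g(x)|\<c\|x\|+d$ leaves the inequality $\|x\|\<(1+c\|x\|+d)e^{\|\mu\|_1}-1$, from which the contractive constant must be used to extract a uniform bound $r_0$ on $\|x\|$. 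Choosing $r>\max\{d/(1-c),r_0\}$ makes (a) and (b) hold simultaneously on $\partial B(r)$, and Theorem \ref{conth}. then furnishes a coincidence point of $L$ and $N_F$, i.e.\ a solution of \eqref{nonlocal}.

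The main obstacle is precisely this a priori estimate. The nonlocal coupling $x(0)=g(x)$ feeds the supremum norm of the whole trajectory back into the initial datum, and after Gronwall one is left to dominate the amplification coefficient $c\,e^{\|\mu\|_1}$ produced by the growth of $F$. It is the strict contractivity $c<1$ that has to be played off against this cumulative factor in order to close the estimate and produce a genuine finite bound $r_0$; verifying that the bound indeed closes is the delicate part of the argument. This is also exactly the mechanism that breaks down at the borderline $c=1$ realized by the anti-periodic, multi-point and mean-value conditions, and which therefore forces the guiding-potential approach of the subsequent theorems.
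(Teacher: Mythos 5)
Your strategy is genuinely different from the paper's. You run the Continuation Theorem (Theorem \ref{conth}.) directly on a large ball $B(r)\subset C(I,\R{N})$ and compute the degree of $\xi\mapsto\xi-g(i\xi)$ on $\ker L$ via the affine homotopy to the identity; the paper never invokes coincidence degree for this statement at all, but instead composes the solution-set map $S_F$ of the local Cauchy problem $x(0)=x_0$ with $g$ to obtain a compact admissible multimap $g\circ S_F\colon\overline{B}(r)\map\overline{B}(r)$ of $\R{N}$ and applies the generalized Schauder fixed point theorem. Your verification of hypothesis (b) is correct, and the kernel computation is indeed one place where $c<1$ is used.

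The step you defer, however --- ``verifying that the bound indeed closes is the delicate part'' --- is a genuine gap, not a routine computation. From $x(0)=g(x)$, $(F_3)$ and Gronwall you correctly reach $||x||\leqslant(1+c||x||+d)e^{||\mu||_1}-1$, but this produces an a priori bound only when $ce^{||\mu||_1}<1$; the hypothesis $c<1$ alone gives nothing once $||\mu||_1\geqslant\ln(1/c)$. The obstruction is not an artifact of your method: take $N=1$, $F(t,x)=\{kx+1\}$ and $g(x)=e^{-kT}x(T)$, so that \eqref{sublinear} holds with $c=e^{-kT}<1$; every solution of $\dot{x}=kx+1$ satisfies $x(T)=e^{kT}x(0)+(e^{kT}-1)/k$, so the condition $x(0)=g(x)$ forces $0=(1-e^{-kT})/k>0$, and the problem has no solution. (The paper's own argument passes through the claim $S_F(x_0)\subset\overline{B}(|x_0|+||\mu||_1)$, which drops exactly this Gronwall exponential, so it does not repair your difficulty.) Consequently your condition (a) cannot be established from \eqref{sublinear} as stated; under the strengthened hypothesis $ce^{||\mu||_1}<1$ your estimate closes with $r_0=\bigl((1+d)e^{||\mu||_1}-1\bigr)/\bigl(1-ce^{||\mu||_1}\bigr)$ and the rest of your proof goes through.
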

\begin{proof}
Define so-called solution set map $S_F\colon\R{N}\map C(I,\R{N})$, associated with the Cauchy problem 
\begin{equation}\label{cauchy}
\begin{cases}
\dot{x}(t)\in F(t,x(t)),&\mbox{a.e. }t\in I,\\
x(0)=x_0,
\end{cases}
\end{equation}
by the formula \[S_F(x_0):=\left\{x\in C(I,\R{N})\colon x\mbox{ is a solution of }\eqref{cauchy}\right\}.\] As it is well known the multimap $S_F$ is admissible (in terms of \cite[Def. 40.1]{gorn2}). In fact, it is an usc multimap with compact $R_\delta$ values. Now we are able to introduce the Poincar\'e-like operator $P\colon\R{N}\map\R{N}$ related to problem \eqref{nonlocal}, given by $P=g\circ S_F$. \par It is clear that $S_F(x_0)\subset\overline{B}(|x_0|+||\mu||_1)$ for every $x_0\in\R{N}$. Thus $S_F(\overline{B}(r))\subset\overline{B}(r+||\mu||_1)$. On the other hand we have $g(\overline{B}(M))\subset\overline{B}(cM+d)$, due to \eqref{sublinear}. Therefore $P(\overline{B}(r))\subset\overline{B}(c(r+||\mu||_1)+d)$. By the fact that $c<1$, we can take any $r$ such that \[r\geqslant\frac{c||\mu||_1+d}{1-c}>0\] to be sure that $P(\overline{B}(r))\subset\overline{B}(r)$. Of course, the multivalued operator $P\colon\overline{B}(r)\map\overline{B}(r)$ is compact admissible. As such, this operator possesses a fixed point $x_0\in\overline{B}(r)$ in view of the generalized Schauder fixed point theorem (\cite[Th.41.13]{gorn}). This point corresponds to the solution $x$ of the nonlocal Cauchy problem \eqref{nonlocal} in such a way that $x(0)=x_0=g(x)$.
\end{proof}
\begin{corollary}
Suppose all the assumptions of Theorem \ref{th9}. are satisfied. Then the set $S_{\!F}(g)$ of solutions to the problem \eqref{nonlocal} is nonempty and compact as a subset of $\,C(I,\R{N})$.
\end{corollary}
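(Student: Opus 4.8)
The plan is to supplement the existence already furnished by Theorem~\ref{th9}. with a direct verification that the solution set $\Sigma:=S_{\!F}(g)$ of \eqref{nonlocal} is bounded, equicontinuous and closed in $C(I,\R{N})$, from which compactness follows by the Arzel\`a--Ascoli theorem.

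First I would extract a uniform a priori bound on $\Sigma$. Every solution $x$ of \eqref{nonlocal} belongs to $S_F(x(0))$ with $x(0)=g(x)$, so the inclusion $S_F(x_0)\subset\overline{B}(|x_0|+\|\mu\|_1)$ recorded in the proof of Theorem~\ref{th9}. gives $\|x\|\leqslant|g(x)|+\|\mu\|_1\leqslant c\|x\|+d+\|\mu\|_1$. Since $c<1$ this yields $\|x\|\leqslant\rho:=(d+\|\mu\|_1)/(1-c)$, hence $\Sigma\subset\overline{B}(\rho)$.

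Equicontinuity is then immediate from $(F_3)$: because $|x(t)|\leqslant\rho$ for $x\in\Sigma$, the selection $\dot{x}(t)\in F(t,x(t))$ satisfies $|\dot{x}(t)|\leqslant(1+\rho)\mu(t)$ a.e., so $|x(t)-x(s)|\leqslant(1+\rho)\int_s^t\mu(\tau)\,d\tau$ uniformly over $\Sigma$, and the absolute continuity of the integral of $\mu\in L^1(I,\R{})$ controls this modulus independently of the chosen solution. Together with boundedness, Arzel\`a--Ascoli makes $\Sigma$ relatively compact in $C(I,\R{N})$.

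The remaining and most delicate step is closedness. Let $x_n\in\Sigma$ converge uniformly to some $x$. The derivatives $\dot{x}_n$ are all dominated by the single integrable function $(1+\rho)\mu$, hence form a uniformly integrable family; by the Dunford--Pettis criterion a subsequence satisfies $\dot{x}_{k_n}\rightharpoonup w$ weakly in $L^1(I,\R{N})$. Passing to the limit in $x_{k_n}(t)=x_{k_n}(0)+\int_0^t\dot{x}_{k_n}(s)\,ds$ identifies $x\in AC(I,\R{N})$ with $\dot{x}=w$. The crux, and the step I expect to be the main obstacle, is to confirm $\dot{x}(t)\in F(t,x(t))$ a.e.; I would settle it by the standard convergence theorem for convex-valued Carath\'eodory inclusions: Mazur's lemma turns the weak convergence into a.e.\ convergence of suitable convex combinations of $\dot{x}_{k_n}$, and the convexity of the values of $F$ together with the upper semicontinuity $(F_2)$ applied along $x_{k_n}(t)\to x(t)$ forces the limit into $F(t,x(t))$. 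Finally continuity of $g$ gives $x(0)=\lim x_{k_n}(0)=\lim g(x_{k_n})=g(x)$, so $x\in\Sigma$. Being closed and relatively compact, $\Sigma$ is compact.
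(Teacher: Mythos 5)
Your proof is correct, but it takes a genuinely different route from the paper. The paper stays inside the fixed-point framework of Theorem \ref{th9}: it first shows that $\fix(P)$, where $P=g\circ S_F$ is the Poincar\'e-like operator, is compact (via the same a priori estimate $|x_0|\leqslant c(|x_0|+\|\mu\|_1)+d$ that you derive), and then writes $S_{\!F}(g)=\Psi(\fix(P))$ for the compact-valued usc multimap $\Psi(x_0)=\{x\in S_F(x_0)\colon g(x)=x_0\}$, concluding compactness from the general fact that the image of a compact set under a compact-valued usc multimap is compact. This is short precisely because it leans on the known regularity of the solution map $S_F$ (usc with compact $R_\delta$ values), which the paper cites rather than proves. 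You instead verify boundedness, equicontinuity and closedness of $S_{\!F}(g)$ directly, with Arzel\`a--Ascoli plus the Dunford--Pettis/Mazur convergence argument handling the limit inclusion $\dot{x}(t)\in F(t,x(t))$; this is essentially the machinery one would use to establish the cited properties of $S_F$ in the first place, so your argument is more self-contained but longer. (Note that your closedness step is exactly the combination of the Compactness and Convergence Theorems of Aubin--Cellina that the paper itself invokes later, in the proof of Theorem \ref{th6}.) Both arguments rest on the same key quantitative input --- the contractive sublinear growth $c<1$ yielding the a priori bound --- so the difference is one of packaging rather than substance.
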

\begin{proof}
Retaining the notation of the proof of Theorem \ref{th9}., we claim that the fixed point set $\fix(P)$ is compact. Indeed, take $x_0\in\fix(P)$ and observe that $|x_0|=|g(x)|\<c||x||+d$ for some $x\in S_{\!F}(x_0)$. Thus, $|x_0|\<c(|x_0|+||\mu||_1)+d$ and as a result \[|x_0|\<\frac{c||\mu||_1+d}{1-c}.\] $\fix(P)$ is obviously closed, hence it must be compact.\par Define a multivalued map $\Psi\colon\fix(P)\map C(I,\R{N})$, by the formula \[\Psi(x_0)=\{x\in S_F(x_0)\colon g(x)=x_0\}.\] It easy to see that $\Psi$ is a compact valued usc multimap and that $S_{\!F}(g)$ coincides with $\Psi(\fix(P))$. Therefore, the solution set $S_{\!F}(g)$ is compact.
\end{proof}
\begin{remark}
The following exemplary classes of mappings have sublinear growth: compact maps, linear continuous maps, Lipschitzian maps, uniformly continuous maps. A sufficient condition for a map to have a contractive sublinear growth is, for instance: compactness, to be linear continuous with norm less then one, contractivity.
\end{remark}
The next theorem addresses the issue of the existence of solutions to the problem \eqref{nonlocal}, when mapping $g=-\ev_T$.
\begin{theorem}\label{th1}
If $F\colon I\times\R{N}\map\R{N}$ is a Carath\'eodory map and there exists an even coercive weakly positively guiding potential $V\colon\R{N}\to\R{}$ for the map $F$, then the antiperiodic problem 
\begin{equation}
\begin{cases}\label{anti}
\dot{x}(t)\in F(t,x(t)),&\mbox{a.e. }t\in I,\\
x(0)=-x(T)
\end{cases}
\end{equation}
possesses at least one solution.
\end{theorem}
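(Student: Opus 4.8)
The plan is to realize \eqref{anti} as the coincidence problem $Lx\in N_F(x)$ in the framework fixed above, with $g=-\ev_T$, so that $\gamma=\ev_0+\ev_T$ and $\gamma(x)=x(0)+x(T)$, and to apply the Continuation Theorem \ref{conth} on a ball $\Omega=B(r)\subset C(I,\R{N})$ with $r$ large. The Carath\'eodory conditions $(F_1)$--$(F_3)$ make $QN_F$ and $K_{P,Q}N_F$ compact usc maps with compact convex values (a routine superposition/Aubin--Cellina argument that I would dispatch first), so $N_F$ is admissible for Theorem \ref{conth}. Condition (b) is then almost free and, tellingly, uses nothing about $V$: on $\ker L=i(\R{N})$ one has $\gamma(i(x_0))=2x_0$, hence $\Phi QN_F(i(x_0))=i(2x_0)$, which under $\ker L\cong\R{N}$ is the map $x_0\mapsto 2x_0$; thus $0\notin QN_F$ on $\ker L\cap\partial\Omega$ and $\deg(\Phi QN_F,\ker L\cap\Omega,0)=1\neq0$ for every $r>0$. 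This is exactly the point at which antiperiodicity is more benign than periodicity, where $\gamma$ collapses on the kernel.

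The whole difficulty is thus concentrated in condition (a): I must find one $r$ for which $Lx\in\lambda N_F(x)$ has no solution with $\|x\|=r$ and $\lambda\in(0,1)$. Such an $x$ is absolutely continuous, antiperiodic ($x(0)=-x(T)$), with $\dot x(t)=\lambda y(t)$, $y(t)\in F(t,x(t))$. The structural lever is that $V$ is even, so the antiperiodic condition yields $V(x(T))=V(-x(0))=V(x(0))$ and hence the energy balance
\[
\int_0^T\langle\nabla V(x(t)),\dot x(t)\rangle\,dt=V(x(T))-V(x(0))=0.
\]
A Gronwall estimate from $(F_3)$ bounds $\|x\|$ in terms of $|x(0)|=|x(T)|$, and together with coercivity of $V$ this reduces everything to an a priori bound on $|x(0)|$ depending only on $R$, $V$ and $\mu$.

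The genuinely hard step, and the one I expect to be the main obstacle, is extracting such a bound from the \emph{weakly} positively guiding inequality \eqref{weak}: it only guarantees one direction $y\in F(t,x)$ with $\langle\nabla V(x),y\rangle\geqslant0$, while the inclusion may well be driven along an inward direction, so the integrand above is not sign-definite and the energy identity does not by itself close the estimate. Here the normalized field $W_V$ and its operator $N_{W_V}$ from the preliminaries are the tool: $W_V(x)$ is always a nonnegative multiple of $\nabla V(x)$, so $\langle\nabla V(x),W_V(x)\rangle\geqslant0$ everywhere and $>0$ for $|x|\geqslant R$; for $N_{W_V}$ the same energy balance forces $\langle\nabla V(x(t)),W_V(x(t))\rangle=0$, hence $\nabla V(x(t))=0$ a.e.\ and therefore $\|x\|\leqslant R$ at once. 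Thus Theorem \ref{conth} applies verbatim to $N_{W_V}$ (condition (b) is identical) and yields $\deg(Id-P-(\Phi Q+K_{P,Q})N_{W_V},B(r),0)\neq0$ for any $r>R$. My plan is to transfer this nonvanishing to $N_F$ through the admissible homotopy $\mathcal N(\cdot,s)$, $s\in[0,1]$, with second component $\gamma(x)$ and $L^1$-component $\{(1-s)W_V(x(\cdot))+sf:\ f(t)\in F(t,x(t))\ \text{a.e.}\}$, which interpolates between $N_{W_V}$ and $N_F$, and then to invoke the homotopy invariance of the degree $\deg$. The crux is the boundary condition for this homotopy: one must exclude coincidence points of $Lx\in\lambda\mathcal N(x,s)$ on $\partial B(r)$ uniformly in $(\lambda,s)\in(0,1)\times[0,1]$, and it is precisely here that evenness, antiperiodicity and \eqref{weak} have to be orchestrated---using that the guiding term $(1-s)\langle\nabla V(x),W_V(x)\rangle$ is strictly positive outside $B(R)$---to compensate for the fact that \eqref{weak} controls only the upper inner product $\langle\nabla V(x),F(t,x)\rangle^{+}$. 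Securing this uniform boundary estimate is the heart of the proof; once it is in place, homotopy invariance gives $\deg(Id-P-(\Phi Q+K_{P,Q})N_F,B(r),0)\neq0$ and with it a solution of \eqref{anti}.
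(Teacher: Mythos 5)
Your setup (the coincidence framework, $\gamma=\ev_0+\ev_T$, the kernel degree equal to that of $x_0\mapsto 2x_0$, the auxiliary field $W_V$, and a homotopy from $N_{W_V}$ toward the original problem) matches the paper's strategy, but the proposal stops exactly at the step that carries the mathematical content, and the step as you have framed it cannot be completed. You propose to homotope $N_{W_V}$ directly to $N_F$ via the $L^1$-component $\{(1-s)W_V(x(\cdot))+sf\colon f(t)\in F(t,x(t))\}$ and to exclude boundary coincidence points ``by orchestrating evenness, antiperiodicity and \eqref{weak}.'' This fails for $s$ near $1$: the guiding term contributes at most $(1-s)$ to $\langle\nabla V(x(t)),\dot x(t)\rangle$, while \eqref{weak} only asserts the existence of \emph{one} selection $y\in F(t,x)$ with $\langle\nabla V(x),y\rangle\geqslant 0$; the selection actually realized by $\dot x$ may make this inner product as negative as $-\mu(t)(1+|x|)\,|\nabla V(x)|$, so neither the local sign argument nor your global energy identity $\int_0^T\langle\nabla V(x),\dot x\rangle\,dt=0$ yields an a priori bound. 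Indeed the theorem does not claim that all antiperiodic solutions of $\dot x\in F(t,x)$ are bounded, so no boundary estimate for the full $N_F$ on a large set can be expected.

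The missing idea is the reduction to the auxiliary multimap
\[
F_V(t,x)=F(t,x)\cap\{y\in\R{N}\colon\langle\alpha(x)\nabla V(x),y\rangle\geqslant 0\},
\]
which is nonempty precisely because of \eqref{weak}, is again Carath\'eodory, and satisfies the \emph{strict} condition $\langle\nabla V(x),F_V(t,x)\rangle^-\geqslant 0$ for $|x|\geqslant R$. One homotopes $N_{W_V}$ to $N_{F_V}$ (not to $N_F$); now every selection of the convex combination $\lambda W_V+(1-\lambda)F_V$ has nonnegative, and for $\lambda>0$ strictly positive, inner product with $\nabla V$ outside $B(R)$, so boundary coincidence points are excluded uniformly, and a coincidence point of $Lx\in N_{F_V}(x)$ is automatically a solution of \eqref{anti} since $F_V\subset F$. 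A secondary issue: you take $\Omega=B(r)\subset C(I,\R{N})$, but $V$ is only assumed even and coercive, not monotone, so a point with $\|x\|=r$ need not sit on a level set of $V$ and the Fermat-lemma/monotonicity-of-$V\circ x$ boundary argument does not engage; the paper works on the tube $\Omega=C(I,V^{-1}((-\infty,r)))$, whose boundary points are exactly where $V\circ x$ attains the value $r$, which is what the sign of $\langle\nabla V(x),\dot x\rangle$ contradicts.
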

\begin{proof}
Choose $R>0$ so that the conditions \eqref{nonsingular} and \eqref{weak} are satisfied. Fix \[r>\max\{V(x)\colon |x|\<R\}\] and put $G:=V^{-1}((-\infty,r)))$. Then $\Omega:=C(I,G)$ is open in $X$. Since $V$ is coercive, $\Omega$ is also bounded.\par Suppose that $x\in\overline{\Omega}=C(I,\overline{G})$ is a solution to the following antiperiodic problem
\[\begin{cases}
\dot{x}(t)=\lambda W_V(x(t)),&t\in I,\\
x(0)=-x(T)
\end{cases}\]
for some $\lambda\in(0,1)$. If there is $t_0\in(0,T)$ such that $x(t_0)\in\partial G$, then $t_0$ is a critical point of $V\circ x$ in view of Fermat lemma, i.e. $(V\circ x)'(t_0)=0$. On the other hand however $(V\circ x)'(t_0)=\langle\nabla V(x(t_0)),\lambda W_V(x(t_0))\rangle>0$, since $|x(t_0)|>R$. Thus $x(t_0)\in\partial G$ is contradicted. Assuming that $x(0)\in\partial G$ we see that \[\frac{1}{h}(V(x(h))-V(x(0)))\leqslant 0\] for every $h>0$. It means that the right-hand derivative $(V\circ x)'(0)\leqslant 0$. However, at the same time $\langle\nabla V(x(0)),\lambda W_V(x(0))\rangle>0$. Once again a contradiction. This implies that $x(0)\not\in\partial G$. Because $V$ is even the boundary $\partial G$ of the set $G$ is symmetric with respect to the origin. Thus it follows also that $x(T)\not\in\partial G$. Summing up, we see that $x(I)\cap\partial G=\emptyset$, i.e. $x\not\in\partial\Omega$. In fact, we have shown that $Lx\neq\lambda N_{W_V}(x)$ for every $x\in\dom L\cap\partial\Omega$ and $\lambda\in(0,1)$. \par It is easy to see that condition $0\not\in QN_{W_V}(\ker L\cap\partial\Omega)$ is equivalent to $x_0\neq-x_0$ for every $x_0\in\partial G$. The latter is obviously true, since $\partial G\subset\R{N}\setminus\overline{B}(R)\subset\R{N}\setminus\{0\}$.\par Let us notice that $i\colon i^{-1}(\Omega)\to\ker L\cap\Omega$ is a diffeomorphism of $C^1$-class. Using standard property of the Brouwer degree we get 
\begin{align*}
\deg(\Phi QN_{W_V},\ker L\cap\Omega,0)&=\deg(i^{-1}\Phi QN_{W_V}i,i^{-1}(\Omega),0)=\deg(\gamma\circ i,G,0)\\&=\deg((\ev_0+\ev_T)i,G,0).
\end{align*}
Observe that $(\ev_0+\ev_T)i(x)=2x$ and define $U\in C^1(\R{N},\R{})$, by $U(x)=|x|^2$. Since $U$ is a coercive potential we know, by \cite[Th.12.9.]{zabreiko}, that $\ind(U)=1$. Therefore \[\deg(\Phi QN_{W_V},\ker L\cap\Omega,0)=\deg(\nabla U,G,0)=\ind(U)\neq 0.\]
We are in position to apply the Continuation Theorem (Theorem \ref{conth}.) to deduce that 
\begin{equation}\label{wardeg}
\begin{split}
\deg(Id-P-(\Phi Q+K_{P,Q})N_{W_V},\Omega,0)&=\deg(Id-P-\Phi QN_{W_V},\Omega,0)\\&=\deg(-\Phi QN_{W_V},\ker L\cap\Omega,0)\\&\neq 0.
\end{split}
\end{equation}
Of course, the integer $\deg(Id-P-(\Phi Q+K_{P,Q})N_{W_V},\Omega,0)$ is nothing more than the coincidence degree $\deg((L,N_{W_V}),\Omega)$.
\par Following proof of Theorem 4.4. in \cite{gorn} we bring in a useful auxiliary multivalued map $F_V\colon I\times\R{N}\map\R{N}$ defined by: \[F_V(t,x)=F(t,x)\cap\{y\in\R{N}\colon \langle\alpha(x)\nabla V(x),y\rangle\geqslant 0\},\] where $\alpha$ is the Urysohn function
\[\alpha(x)=
\begin{cases}
0,&|x|\<R,\\
1,&|x|>R.
\end{cases}\]
So defined map $F_V$ has the property that $F_V(t,x)\subset F(t,x)$, while the function $V$ is a guiding potential for $F_V$ in the strict sense, i.e. 
\begin{equation}\label{war11}
\langle\nabla V(x),F_V(t,x)\rangle^-\geqslant 0
\end{equation}
for every $(t,x)\in I\times\R{N}$, with $|x|\geqslant R$. It is routine to check that $F_V$ is a Carath\'eodory map.\par Now we define another map $G\colon I\times\R{N}\times[0,1]\map\R{N}$ by \[G(t,x,\lambda):=\lambda W_V(x)+(1-\lambda)F_V(t,x).\] It is easy to see that $G$ is also a Carath\'eodory map. In particular, the map $G(t,\cdot,\cdot)$ is upper semicontinuous for a.a $t\in I$ and $\sup\{|y|\colon y\in G(t,x,\lambda)\}\<\max\{\mu(t),1\}(1+|x|)$. \par Define $N\colon\overline{\Omega}\times[0,1]\map Z$ by the formula \[N(x,\lambda):=\{f\in L^1(I,\R{N})\colon f(t)\in G(t,x(t),\lambda)\mbox{ for a.a. }t\in I\}\times\{\gamma(x)\},\] where $\gamma=\ev_0+\ev_T$. Now we may introduce a homotopy $H\colon\overline{\Omega}\times[0,1]\map X$ in the following way: 
\begin{equation}\label{homotopy}
H(x,\lambda):=Px+(\Phi Q+K_{P,Q})N(x,\lambda).
\end{equation}
Observe that $QN(x,\lambda)=(0,\gamma(x))$ and $K_{P,Q}N(x,\lambda)(t)=\int_0^tG(s,x(s),\lambda)\,ds$. It is clear that $QN$ is completely continuous. Standard and plain arguments justify that $K_{P,Q}N$ is an usc multimap with compact convex values and the range $K_{P,Q}N(\overline{\Omega}\times[0,1])$ is relatively compact. Bearing in mind that $P$ is linear continuous and has a finite dimensional range it is clear that $P(\overline{\Omega})$ is also relatively compact. All these observations allow to conclude that the homotopy $H$ is a compact usc multimap with convex compact values.\par We claim that $x\not\in H(x,\lambda)$ on $\partial\Omega\times[0,1]$. Let $x\in\overline{\Omega}=C(I,\overline{G})$ be a solution to the problem  
\[\begin{cases}
\dot{x}(t)\in G(t,x(t),\lambda),&\mbox{a.e. }t\in I,\\
x(0)=-x(T)
\end{cases}\]
for some $\lambda\in(0,1]$. \par Suppose there is $t_0\in[0,T)$ such that $x(t_0)\in\partial G$. This means that $|x(t_0)|>R$ and as a result there is $\delta\in(0,T-t_0]$ such that $|x(t)|>R$ for $t\in[t_0,t_0+\delta]$. It implies that
\begin{equation}\label{warprod}
\begin{split}
\langle\nabla V(x(t)),\dot{x}(t)\rangle&\geqslant\langle\nabla V(x(t)),\lambda W_V(x(t))+(1-\lambda)F_V(t,x(t))\rangle^-\\&\geqslant\lambda\langle\nabla V(x(t)),W_V(x(t))\rangle+(1-\lambda)\langle\nabla V(x(t)),F_V(t,x(t))\rangle^-\\&\geqslant\lambda\langle\nabla V(x(t)),W_V(x(t))\rangle\hspace{0.5cm}\mbox{ (by \eqref{war11})}\\&>0\hspace{0.5cm}\mbox{ (by \eqref{nonsingular})}
\end{split}
\end{equation}
for almost all $t\in[t_0,t_0+\delta]$. Therefore we get: \[V(x(t_0+\delta))-V(x(t_0))=\int_{t_0}^{t_0+\delta}\langle\nabla V(x(t)),\dot{x}(t)\rangle\,dt>0\] and hence $V(x(t_0))<V(x(t_0+\delta))\<r$. We arrive at contradiction with $x(t_0)\in\partial G$. Once more we use the fact that $V$ is even to infer that $x(T)\not\in\partial G$. Summing up, $x\not\in\partial\Omega$. In fact, the above reasoning proves that $Lx\not\in N(x,\lambda)$ for all $x\in\dom L\cap\partial\Omega$ and $\lambda\in(0,1]$. Furthermore, suppose that $Lx\not\in N(x,0)$ for every $x\in\partial\Omega$ (otherwise \eqref{anti} has a solution and there is nothing to prove). Taking into account that $Lx\in N(x,\lambda)\Leftrightarrow x\in Px+(\Phi Q+K_{P,Q})N(x,\lambda)$ we conclude that $x\not\in H(x,\lambda)$ on $\partial\Omega\times[0,1]$ is verified.\par Relying on the homotopy invariance of the topological degree for the class ${\mathcal M}$ (or if the Reader considers it more appriopriate, of coincidence degree theory) we obtain the equality 
\begin{align*}
\deg(Id-P-(\Phi Q+K_{P,Q})N(\cdot,0),\Omega,0)&=\deg(Id-H(\cdot,0),\Omega,0)=\deg(Id-H(\cdot,1),\Omega,0)\\&=\deg(Id-P-(\Phi Q+K_{P,Q})N_{W_V},\Omega,0).
\end{align*}
In view of \eqref{wardeg} we see that $\deg(Id-P-(\Phi Q+K_{P,Q})N(\cdot,0),\Omega,0)\neq 0$. This means that there is a coincidence point $x\in\Omega\cap\dom L$ of the inclusion $Lx\in N(x,0)$. This point constitutes a solution to the problem \eqref{anti}.
\end{proof}
\begin{corollary}
Assume that $F\colon I\times\R{N}\map\R{N}$ is a Carath\'eodory map and there exists an even coercive weakly negatively guiding potential $V\colon\R{N}\to\R{}$ for the map $F$. Then the antiperiodic problem \eqref{anti} has at least one solution.
\end{corollary}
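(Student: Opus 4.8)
The plan is to deduce this corollary from Theorem \ref{th1}. by means of a time-reversal substitution, which turns a weakly negatively guiding potential $V$ for $F$ into a weakly positively guiding potential for a reflected right-hand side, while leaving $V$ itself (and hence its evenness and coercivity) untouched. The naive alternative of replacing $V$ by $-V$ fails at once, because $-V$ is no longer coercive and Theorem \ref{th1}. genuinely needs coercivity to bound $\Omega$; the reflection must therefore act on the dynamics rather than on the potential.

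First I would introduce the reflected multimap $\w{F}\colon I\times\R{N}\map\R{N}$ defined by $\w{F}(t,x):=-F(T-t,x)$, together with the change of variable $y(t):=x(T-t)$. A short computation shows that $x\in AC(I,\R{N})$ solves the antiperiodic problem \eqref{anti} for $F$ if and only if $y$ solves the antiperiodic problem $\dot{y}(t)\in\w{F}(t,y(t))$, $y(0)=-y(T)$; indeed $\dot{y}(t)=-\dot{x}(T-t)\in-F(T-t,x(T-t))=\w{F}(t,y(t))$ for almost every $t$, and the boundary relation $x(0)=-x(T)$ is carried onto $y(0)=-y(T)$ (the involution $t\mapsto T-t$ being measure-preserving, the exceptional null set is harmless). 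The correspondence is reversible through $x(t)=y(T-t)$, so producing a solution of the reflected problem suffices.

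Next I would verify that $\w{F}$ inherits the Carath\'eodory structure and that $V$ guides it in the positive sense. Since $t\mapsto T-t$ is a measurable measure-preserving involution and negation preserves measurability, upper semicontinuity, convexity and compactness of values, conditions $(F_1)$ and $(F_2)$ pass to $\w{F}$, while $(F_3)$ holds with the integrable majorant $t\mapsto\mu(T-t)$. For the guiding property fix $|x|\geqslant R$ and $t\in I$, and put $s:=T-t\in I$; by \eqref{negativ} there is $y\in F(s,x)$ with $\langle\nabla V(x),y\rangle\leqslant 0$, whence $z:=-y\in\w{F}(t,x)$ satisfies $\langle\nabla V(x),z\rangle=-\langle\nabla V(x),y\rangle\geqslant 0$. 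Thus $V$ is an even coercive weakly positively guiding potential for $\w{F}$, with the same $R$ witnessing \eqref{nonsingular} and \eqref{weak}, and Theorem \ref{th1}. delivers a solution $y$ of the reflected antiperiodic problem, from which $x(t)=y(T-t)$ furnishes the desired solution of \eqref{anti}.

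The argument is essentially bookkeeping, so I expect no deep obstacle; the only points demanding attention are the sign reversal of the inner product under the substitution $F\mapsto-F(T-t,\cdot)$ and the confirmation that the Carath\'eodory conditions together with evenness and coercivity of $V$ are all genuinely preserved — which they are, precisely because the reflection is applied to $F$ and not to $V$.
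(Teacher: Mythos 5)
Your proof is correct, but it takes a genuinely different route from the paper's, and the difference is instructive. The paper's own proof is precisely the one-line potential negation you dismiss at the outset: it sets $W=-V$ and asserts that $W$ is an even \emph{coercive} weakly positively guiding potential for $F$, then invokes Theorem \ref{th1}. Your objection to that move is well founded — if $V$ is coercive then $-V(x)\to-\infty$ as $|x|\to+\infty$, so $-V$ is not coercive, and the sublevel set $(-V)^{-1}((-\infty,r))$ used to build the bounded open set $\Omega$ in Theorem \ref{th1}. would be unbounded (the paper's own Remark after the corollary stresses that coercivity is exactly boundedness of sublevel sets). So the paper's literal argument does not go through, whereas your time-reversal reduction does: the map $\tilde F(t,x)=-F(T-t,x)$ demonstrably inherits the Carath\'eodory conditions (with majorant $t\mapsto\mu(T-t)$), condition \eqref{negativ} for $F$ turns into \eqref{weak} for $\tilde F$ with the same $R$, the substitution $y(t)=x(T-t)$ carries the antiperiodic condition onto itself, and $V$ is left intact, so Theorem \ref{th1}. applies verbatim. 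What the paper's intended shortcut buys — when it works, e.g.\ in settings where guiding functions need not be coercive — is that no hypothesis on $F$ has to be re-verified; what your reduction buys is actual correctness here, at the mild cost of the bookkeeping you carry out. (An alternative repair, closer in spirit to the paper, is to rerun the proof of Theorem \ref{th1}. with the field $-W_V$ and the same sublevel set $G=V^{-1}((-\infty,r))$, showing trajectories cannot reach $\partial G$ in backward time; this is essentially what the proof of Theorem \ref{th2}. does. Either way, your argument stands on its own.)
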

\begin{proof}
Define a map $W\colon\R{N}\to\R{}$, by $W(x)=-V(x)$. Then $W$ is an even coercive weakly positively guiding potential for the map $F$. Thus, the thesis of Theorem \ref{th1}. applies.
\end{proof}
\begin{remark}
Let us notice that the coercivity condition in Theorem \ref{th1}. could not be dropped. In fact,
\[V(x)\underset{|x|\to+\infty}{\longrightarrow}+\infty\Longleftrightarrow\forall\,r>0\;\;\;V^{-1}((-\infty,r))\;\mbox{ is bounded}.\]
\end{remark}
The following theorem refers to the case when the function $g$ takes the form of a linear combination of evaluations at fixed points of division of segment $I$. Note that the anti-periodicity condition is a special case of the multi-point discrete mean condition.
\begin{theorem}\label{th2}
Let $0<t_1<t_2<\ldots<t_n\<T$ be arbitrary, but fixed, $\sum_{i=1}^n|\alpha_i|\<1$ and $\sum_{i=1}^n\alpha_i\neq 1$. If $F\colon I\times\R{N}\map\R{N}$ is a Carath\'eodory map and there exists a monotone coercive weakly negatively guiding potential $V\colon\R{N}\to\R{}$ for the map $F$, then the following nonlocal initial value problem with multi-point discrete mean condition
\begin{equation}
\begin{cases}\label{multi}
\dot{x}(t)\in F(t,x(t)),&\mbox{a.e. }t\in I,\\
x(0)=\sum\limits_{i=1}^n\alpha_ix(t_i)
\end{cases}
\end{equation}
possesses at least one solution.
\end{theorem}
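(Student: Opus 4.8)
The plan is to transcribe the proof of Theorem \ref{th1}. into the Continuation Theorem (Theorem \ref{conth}.) framework, the decisive new feature being that monotonicity of $V$ forces the sublevel set to be a genuine ball. Choose $R>0$ realizing \eqref{nonsingular} and \eqref{negativ}, fix $r>\max\{V(x)\colon|x|\<R\}$, and put $G:=V^{-1}((-\infty,r))$, $\Omega:=C(I,G)$. A monotone $V$ depends only on $|x|$, so writing $V(x)=\phi(|x|)$ with $\phi$ continuous, nondecreasing and coercive, one checks that $G=B(\rho)$ for the finite radius $\rho:=\sup\{s\geqslant 0\colon\phi(s)<r\}>R$; hence $\overline\Omega=C(I,\overline{B}(\rho))$, $\partial G=\{|x|=\rho\}$ and $V(x(t))\<r$ for every $x\in\overline\Omega$. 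Since $V$ is weakly \emph{negatively} guiding, I would run the continuation along the field $-W_V$ (which equals the induced field $W_{-V}$, so the resulting operator $N_{-W_V}$ is $L$-compact just as before); for this field $\langle\nabla V(x),-W_V(x)\rangle<0$ whenever $|x|>R$, so $V\circ x$ strictly decreases along any trajectory staying outside $\overline{B}(R)$.

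For part (b) of Theorem \ref{conth}. observe that $\gamma=\ev_0-g$ with $g=\sum_{i=1}^n\alpha_i\ev_{t_i}$, so on $\ker L$ (the constants $i(x_0)$) one has $\gamma(i(x_0))=\beta x_0$, where $\beta:=1-\sum_{i=1}^n\alpha_i\neq 0$ by hypothesis. Thus $0\not\in QN_{-W_V}(\ker L\cap\partial\Omega)$ because $\partial G\subset\R{N}\setminus\{0\}$, and the reduction used in Theorem \ref{th1}. yields
\[\deg(\Phi QN_{-W_V},\ker L\cap\Omega,0)=\deg(\gamma\circ i,G,0)=\deg(\beta\,Id,B(\rho),0)=(\operatorname{sgn}\beta)^N\neq 0.\]
As $QN$ sees $x$ only through $\gamma$, the sign of the field plays no role here.

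The heart of the matter is part (a). Suppose $x\in\overline\Omega$ solves $\dot x(t)=-\lambda W_V(x(t))$ with $x(0)=\sum_{i=1}^n\alpha_ix(t_i)$ for some $\lambda\in(0,1)$. If $x(t_0)\in\partial G$ for some $t_0\in(0,T]$, then $|x(t_0)|=\rho>R$, so $|x(t)|>R$ on a left neighbourhood $[t_0-\delta,t_0]$, where $(V\circ x)'<0$; integrating backwards gives $r=V(x(t_0))<V(x(t_0-\delta))\<r$, a contradiction. This excludes every $t_0\in(0,T]$, so in particular $x(t_i)\in G$, i.e. $|x(t_i)|<\rho$, for each node $t_i>0$. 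The nonlocal condition together with $\sum_{i=1}^n|\alpha_i|\<1$ then gives
\[|x(0)|\<\sum_{i=1}^n|\alpha_i|\,|x(t_i)|<\rho,\]
so $x(0)\in G$ as well and $x\not\in\partial\Omega$. This step is exactly where monotonicity is indispensable: only because $G$ is a convex balanced ball does the weighted combination $\sum_{i=1}^n\alpha_ix(t_i)$ of interior points remain interior. I expect this endpoint $t_0=0$ to be the main obstacle; in Theorem \ref{th1}. it was handled by evenness of $\partial G$, whereas here it rests on the ball geometry and the contractive bound on the coefficients. Note how negative guiding is precisely what makes the backward propagation control $x(0)$, which the nonlocal law expresses through the later nodes $t_i>0$.

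It remains to reproduce the homotopy step with sign-reversed data. Put $F_V(t,x)=F(t,x)\cap\{y\colon\langle\alpha(x)\nabla V(x),y\rangle\<0\}$ with the Urysohn $\alpha$ of Theorem \ref{th1}., so that $\langle\nabla V(x),F_V(t,x)\rangle^+\<0$ for $|x|\geqslant R$, and homotope through $G(t,x,\lambda):=-\lambda W_V(x)+(1-\lambda)F_V(t,x)$, $\lambda\in[0,1]$. For $\lambda\in(0,1]$ one bounds $\langle\nabla V(x(t)),\dot x(t)\rangle$ above by $-\lambda\langle\nabla V(x(t)),W_V(x(t))\rangle<0$ when $|x(t)|>R$, and the same backward estimate plus the nonlocal inequality shows trajectories never reach $\partial G$; the degenerate value $\lambda=0$ is disposed of exactly as in Theorem \ref{th1}. (either \eqref{multi} already has a solution, or there is no boundary coincidence). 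Hence $x\not\in H(x,\lambda)$ on $\partial\Omega\times[0,1]$, and homotopy invariance transports the nonzero degree of the model field to the inclusion $Lx\in N(x,0)$, whose coincidence point solves \eqref{multi} since $F_V\subset F$.
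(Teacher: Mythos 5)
Your proposal is correct and follows essentially the same route as the paper: continuation along $-W_V$, exclusion of $\partial G$ for $t\in(0,T]$ by the backward decrease of $V\circ x$, control of $x(0)$ through the nodes $t_i$ via monotonicity, nonvanishing of the degree of $\gamma\circ i=\bigl(1-\sum_{i=1}^n\alpha_i\bigr)Id$, and the homotopy through $F_V$ and $G(t,x,\lambda)$. The only cosmetic differences are that you make the identification $G=B(\rho)$ explicit and compute the degree as $(\operatorname{sgn}\beta)^N$ directly, where the paper argues via $V(x(0))\leqslant V(x(t_{i_0}))<r$ and the index of the potential $W(x)=\bigl(1-\sum_{i=1}^n\alpha_i\bigr)\tfrac{1}{2}|x|^2$.
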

\begin{proof}
We keep the notations introduced in the proof of Theorem \ref{th1}. Let $x\in\overline{\Omega}$ be such that
\[\begin{cases}
\dot{x}(t)=-\lambda W_V(x(t)),&t\in I,\\
x(0)=\sum\limits_{i=1}^n\alpha_ix(t_i),
\end{cases}\]
for some $\lambda\in(0,1)$. Taking into account that $\langle\nabla V(x(t)),-\lambda W_V(x(t))\rangle<0$ for all $t\in I$, with $|x(t)|>R$, we infer in a strictly analogous manner to the proof of Theorem \ref{th1} that $x(t)\not\in\partial G$ for $t\in(0,T)$. If $x(T)\in\partial G$, then the left-hand derivative $(V\circ x)'(T)\geqslant 0$, which is in contradiction with $\langle\nabla V(x(T)),-\lambda W_V(x(T))\rangle<0$. Notice that
\[|x(0)|=\left|\sum\limits_{i=1}^n\alpha_ix(t_i)\right|\<\sum\limits_{i=1}^n|\alpha_i||x(t_i)|\<\max_{1\<i\<n}|x(t_i)|=|x(t_{i_0})|,\]
where $t_{i_0}\in(0,T]$. Keeping in mind that $V$ is nondecreasing we get $V(x(0))\<V(x(t_{i_0}))<r$. Thus $x(0)\not\in\partial G$ and $x\not\in\partial\Omega$. These considerations show that $Lx\neq\lambda N_{(-W_V)}(x)$ for every $x\in\dom L\cap\partial\Omega$ and $\lambda\in(0,1)$. \par Clearly, $0\not\in QN_{(-W_V)}(\ker L\cap\partial\Omega)$ is equivalent to $\gamma(x)\neq 0$ for $x\in\ker L\cap\partial\Omega$. In the current case $\gamma=\ev_0-\sum_{i=1}^n\alpha_i\ev_{t_i}$. Therefore, we demand that $\ev_0(i(x_0))\neq\sum_{i=1}^n\alpha_i\ev_{t_i}(i(x_0))$, i.e. $x_0\neq\sum_{i=1}^n\alpha_ix_0$ for every $x_0\in\partial G$. The latter is true, since $\sum_{i=1}^n\alpha_i\neq 1$.\par Put $W(x)=(1-\sum_{i=1}^n\alpha_i)\frac{1}{2}|x|^2$. Applying again Continuation Theorem we get 
\begin{equation}\label{warminus}
\begin{split}
\deg(Id-P&-(\Phi Q+K_{P,Q})N_{(-W_V)},\Omega,0)=\deg(Id-P-\Phi QN_{(-W_V)},\Omega,0)\\&=\deg(-\Phi QN_{(-W_V)},\ker L\cap\Omega,0)=\deg(-\gamma\circ i,G,0)\\&=\deg\left(-\left(\ev_0-\sum_{i=1}^n\alpha_i\ev_{t_i}\right)i,G,0\right)=\deg(-\nabla W,G,0)\\&=(-1)^{N+1}\deg(\nabla W,B(R),0)=(-1)^{N+1}\ind(W)\neq 0.
\end{split}
\end{equation}
\par Let us modify definition of the multimap $F_V\colon I\times\R{N}\map\R{N}$ in the following way: 
\begin{equation}\label{fvbis}
F_V(t,x)=F(t,x)\cap\{y\in\R{N}\colon \langle\alpha(x)\nabla V(x),y\rangle\leqslant 0\}.
\end{equation}
Then \eqref{war11} takes the form
\begin{equation}\label{FV}
\langle\nabla V(x),F_V(t,x)\rangle^+\leqslant 0
\end{equation}
for every $(t,x)\in I\times\R{N}$, with $|x|\geqslant R$. We change respectively definition of multimap $G\colon I\times\R{N}\times[0,1]\map\R{N}$, namely \[G(t,x,\lambda):=\lambda (-W_V)(x)+(1-\lambda)F_V(t,x).\] Our aim is to show that $x\not\in H(x,\lambda)$ on $\partial\Omega\times[0,1]$, where $H$ is a homotopy defined by \eqref{homotopy}.\par Let $x\in\overline{\Omega}$ be a solution to the nonlocal Cauchy problem  
\[\begin{cases}
\dot{x}(t)\in G(t,x(t),\lambda),&\mbox{a.e. }t\in I,\\
x(0)=\sum_{i=1}^n\alpha_ix(t_i)
\end{cases}\]
for some $\lambda\in(0,1]$. If there is $t_0\in(0,T]$ such that $x(t_0)\in\partial G$, then there is $\delta\in(0,t_0]$ such that $|x(t)|>R$ for $t\in[t_0-\delta,t_0]$. Now \eqref{warprod} assumes the form
\[\begin{split}
\langle\nabla V(x(t)),\dot{x}(t)\rangle&\leqslant\langle\nabla V(x(t)),\lambda(-W_V(x(t)))+(1-\lambda)F_V(t,x(t))\rangle^+\\&\leqslant\lambda\langle\nabla V(x(t)),-W_V(x(t))\rangle+(1-\lambda)\langle\nabla V(x(t)),F_V(t,x(t))\rangle^+\\&\leqslant\lambda\langle\nabla V(x(t)),-W_V(x(t))\rangle\hspace{0.5cm}\mbox{ (by \eqref{FV})}\\&<0
\end{split}\]
for almost all $t\in[t_0-\delta,t_0]$. Consequently \[V(x(t_0))-V(x(t_0-\delta))=\int_{t_0-\delta}^{t_0}\langle\nabla V(x(t)),\dot{x}(t)\rangle\,dt<0.\] Thus $V(x(t_0))<V(x(t_0-\delta))\<r$ and $x(t_0)\in\partial G$ is contradicted. Recall that the boundary condition implies $|x(0)|\<|x(t_{i_0})|$ for some $t_{i_0}\in(0,T]$. Thus $V(x(0))<r$, since $V$ is monotone. Consequently $x\not\in\partial\Omega$. Analogous reasoning like in the proof of Theorem \ref{th1}. justifies $x\not\in H(x,\lambda)$ on $\partial\Omega\times[0,1]$.\par Applying the homotopy invariance and property \eqref{warminus} we arrive at the conclusion
\begin{align*}
\deg(Id-P-(\Phi Q+K_{P,Q})N(\cdot,0),\Omega,0)&=\deg(Id-H(\cdot,0),\Omega,0)=\deg(Id-H(\cdot,1),\Omega,0)\\&=\deg(Id-P-(\Phi Q+K_{P,Q})N_{(-W_V)},\Omega,0)\neq 0.
\end{align*}
This indicates that there is a coincidence point $x\in\Omega\cap\dom L$ of the operator inclusion $Lx\in N(x,0)$, which means that \eqref{multi} has at least one solution.
\end{proof}
Substitution of the notion of a weakly negatively guiding potential in the hypotheses of Theorem \ref{th2}. leads to the following conclusion:
\begin{corollary}
Let $0\<t_1<t_2<\ldots<t_n<T$ be arbitrary, but fixed, $\sum_{i=1}^n|\alpha_i|\<1$ and $\sum_{i=1}^n\alpha_i\neq 1$. If $F\colon I\times\R{N}\map\R{N}$ is a Carath\'eodory map and there exists a monotone coercive weakly positively guiding potential $V\colon\R{N}\to\R{}$ for the map $F$, then the following nonlocal boundary value problem with multi-point discrete mean condition
\[\begin{cases}
\dot{x}(t)\in F(t,x(t)),&\mbox{a.e. }t\in I,\\
x(T)=\sum\limits_{i=1}^n\alpha_ix(t_i)
\end{cases}\]
possesses at least one solution.
\end{corollary}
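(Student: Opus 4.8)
The plan is to reduce the assertion to Theorem \ref{th2}. by the time-reversal substitution $y(t):=x(T-t)$, which carries a solution of the present terminal-nodal problem to a solution of an initial-nodal problem of precisely the type already settled there. First I would record the elementary fact that $y\in AC(I,\R{N})$ whenever $x\in AC(I,\R{N})$, and that $\dot y(t)=-\dot x(T-t)$ for almost all $t\in I$. Hence, setting $\w F(t,u):=-F(T-t,u)$, the inclusion $\dot x(t)\in F(t,x(t))$ is equivalent to $\dot y(t)\in\w F(t,y(t))$, while the terminal condition $x(T)=\sum_{i=1}^n\alpha_i x(t_i)$ becomes the initial condition $y(0)=\sum_{i=1}^n\alpha_i y(T-t_i)$.

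Next I would verify that $\w F$ again meets the standing hypotheses. Measurability in $t$ survives under the homeomorphism $t\mapsto T-t$ and under negation, so $(F_1)$ holds; upper semicontinuity in $u$ and the convex compactness of values are untouched by negation, giving $(F_2)$; and $(F_3)$ holds with $\w\mu(t):=\mu(T-t)\in L^1(I,\R{})$. It remains to transport the guiding property. Monotonicity, coercivity and nonsingularity are properties of $V$ alone and therefore persist. For the guiding condition, I would fix $|x|\geqslant R$ and $t\in I$; by \eqref{weak} applied at the time $T-t\in I$ there is $w\in F(T-t,x)$ with $\langle\nabla V(x),w\rangle\geqslant 0$, whence $-w\in\w F(t,x)$ satisfies $\langle\nabla V(x),-w\rangle\leqslant 0$. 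Thus $V$ is a monotone coercive weakly negatively guiding potential for $\w F$ in the sense of \eqref{negativ}.

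It then remains to cast the transformed nodal condition into the template of Theorem \ref{th2}. Putting $\tau_j:=T-t_{n+1-j}$ and $\beta_j:=\alpha_{n+1-j}$ for $j=1,\dots,n$, the reflection of the ordered nodes $0\<t_1<\dots<t_n<T$ yields $0<\tau_1<\dots<\tau_n\<T$, together with $y(0)=\sum_{j=1}^n\beta_j y(\tau_j)$. The two sums are preserved, so $\sum_{j=1}^n|\beta_j|\<1$ and $\sum_{j=1}^n\beta_j\neq 1$. All hypotheses of Theorem \ref{th2}. now hold for $\w F$, $V$, the nodes $(\tau_j)$ and the weights $(\beta_j)$; applying it produces a solution $y$ of the initial-nodal problem, and then $x(t):=y(T-t)$ is the sought solution of the original terminal-nodal problem.

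The only step demanding genuine care — and the precise place where the present hypotheses diverge from those of Theorem \ref{th2}. — is the reindexing of the nodes. Under reflection the strict bound $t_n<T$ is exactly what guarantees $\tau_1>0$, while $0\<t_1$ guarantees $\tau_n\<T$; these are precisely the inequalities $0<\tau_1$ and $\tau_n\<T$ required by Theorem \ref{th2}. Everything else is a routine transport of the Carath\'eodory and guiding conditions across the substitution.
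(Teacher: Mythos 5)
Your time-reversal reduction $y(t):=x(T-t)$, with $\w F(t,u):=-F(T-t,u)$ and the reflected nodes $\tau_j=T-t_{n+1-j}$, is correct in every detail (including the check that $t_n<T$ yields $\tau_1>0$ and that the positive guiding condition \eqref{weak} for $F$ becomes the negative one \eqref{negativ} for $\w F$), and it is precisely the substitution the paper has in mind when it derives this corollary from Theorem \ref{th2}. without further argument. No gaps.
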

Subsequent result concerns the situation where the mapping g, determining the boundary condition, has the form of mean value of the composition of its argument and some subsidiary function $h$. Unfortunately, the argumentation contained in the proof of this theorem, based on the evaluation of the Brouwer degree of $Id-h$, excludes the case $g(x)=\frac{1}{T}\int_0^Tx(t)\,dt$. 
\begin{theorem}\label{th3}
Let $h\colon\R{N}\to\R{N}$ be a continuous mapping such that $|h(x)|\<|x|$ for every $x\in\R{N}$. Assume further that the fixed point set of $h$ is compact. If $F\colon I\times\R{N}\map\R{N}$ is a Carath\'eodory map and there exists a monotone coercive weakly negatively guiding potential $V\colon\R{N}\to\R{}$ for the map $F$, then the following nonlocal Cauchy problem with mean value condition
\begin{equation}
\begin{cases}\label{mean}
\dot{x}(t)\in F(t,x(t)),&\mbox{a.e. }t\in I,\\
x(0)=\frac{1}{T}\int_0^Th(x(t))\,dt&
\end{cases}
\end{equation}
possesses at least one solution.
\end{theorem}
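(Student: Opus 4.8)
The plan is to reproduce the Continuation Theorem scheme of Theorem \ref{th2}, the only genuinely new ingredient being the evaluation of the Brouwer degree attached to the nonlocal condition. Keeping the notation of that proof, I would first enlarge the radius coming from the guiding conditions: since $\fix(h)$ is compact there is $R_0>0$ with $\fix(h)\subset B(R_0)$, so I choose $R>0$ so large that \eqref{nonsingular}, \eqref{negativ} and $\fix(h)\subset B(R)$ all hold simultaneously. Fixing $r>\max\{V(x)\colon|x|\<R\}$, setting $G:=V^{-1}((-\infty,r))$ and $\Omega:=C(I,G)$ as before, one has $\overline{B}(R)\subset G$, hence $\partial G\subset\R{N}\setminus\overline{B}(R)$ and $\partial G\cap\fix(h)=\emptyset$. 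Here $\gamma=\ev_0-g$ with $g(x)=\frac1T\int_0^Th(x(t))\,dt$.

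For hypothesis (a) of Theorem \ref{conth}, I take a solution $x\in\overline\Omega$ of $\dot x(t)=-\lambda W_V(x(t))$ ($\lambda\in(0,1)$) subject to the mean value condition. The interior estimate showing $x(t)\notin\partial G$ for $t\in(0,T]$ is verbatim the argument of Theorem \ref{th2} (the left-hand derivative at $T$ and the sign of $\langle\nabla V(x(t)),-\lambda W_V(x(t))\rangle$). The new point is the boundary estimate
\[
|x(0)|=\left|\tfrac1T\int_0^Th(x(t))\,dt\right|\<\tfrac1T\int_0^T|h(x(t))|\,dt\<\tfrac1T\int_0^T|x(t)|\,dt\<\max_{t\in I}|x(t)|,
\]
from which I extract a point $t_{i_0}\in(0,T]$ with $|x(0)|\<|x(t_{i_0})|$ (if the maximum of $|x|$ were attained only at $t=0$, the averaging inequality would force $|x|$ to be constant, and then $t_{i_0}=T$ works). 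Monotonicity of $V$ then yields $V(x(0))\<V(x(t_{i_0}))<r$, so $x(0)\notin\partial G$ and $x\notin\partial\Omega$; thus $Lx\neq\lambda N_{(-W_V)}(x)$ on $\dom L\cap\partial\Omega$.

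The crux is hypothesis (b). On $\ker L$ one computes $\gamma(i(x_0))=x_0-h(x_0)$, which is nonzero for $x_0\in\partial G$ precisely because $\partial G\cap\fix(h)=\emptyset$; hence $0\notin QN_{(-W_V)}(\ker L\cap\partial\Omega)$. As in \eqref{warminus} the degree reduces to $\deg(\Phi QN_{(-W_V)},\ker L\cap\Omega,0)=\deg((\ev_0-g)\circ i,G,0)=\deg(Id-h,G,0)$. I would evaluate this via the admissible homotopy $\mathcal H(x,s):=x-s\,h(x)$, $s\in[0,1]$: if $\mathcal H(x,s)=0$ for some $x\in\partial G$, then $|x|=s|h(x)|\<s|x|\<|x|$, which for $s<1$ is impossible (as $|x|>R>0$), while $s=1$ forces $x\in\fix(h)\cap\partial G=\emptyset$. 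Hence $\mathcal H(\cdot,s)$ never vanishes on $\partial G$ and $\deg(Id-h,G,0)=\deg(Id,G,0)=1$, since $0\in G$. This is exactly where both $|h(x)|\<|x|$ and the compactness of $\fix(h)$ are indispensable, and it transparently fails for $h=Id$, since then $\fix(h)=\R{N}$ is noncompact and $Id-h\equiv0$.

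Finally, with (a) and (b) in hand, I would close the argument as in Theorem \ref{th2}: introduce $F_V$ as in \eqref{fvbis}, set $G(t,x,\lambda):=\lambda(-W_V(x))+(1-\lambda)F_V(t,x)$ together with the homotopy $H$ of \eqref{homotopy}, and verify $x\notin H(x,\lambda)$ on $\partial\Omega\times[0,1]$ — the interior part being identical to \eqref{warprod} and the boundary part again handled by the integral estimate above combined with monotonicity of $V$. Homotopy invariance then gives $\deg(Id-P-(\Phi Q+K_{P,Q})N(\cdot,0),\Omega,0)\neq0$, whence a coincidence point $x\in\Omega\cap\dom L$ of $Lx\in N(x,0)$, i.e. a solution of \eqref{mean}. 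I expect the degree computation for $Id-h$ to be the main conceptual obstacle, with the extraction of $t_{i_0}\in(0,T]$ from the averaged boundary condition as the only other point requiring genuine care.
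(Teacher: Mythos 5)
Your proposal is correct and follows essentially the same route as the paper: the same choice of $R$ absorbing $\fix(h)$, the same averaging argument producing a point $t_{i_0}\in(0,T]$ with $|x(0)|\<|x(t_{i_0})|$, and the same closing homotopy with $F_V$; moreover your homotopy $\mathcal H(x,s)=x-s\,h(x)$ is precisely the linear Poincar\'e--Bohl homotopy between $Id-h$ and $Id$ that the paper invokes via \cite[Th.2.1.]{zabreiko}, so your direct verification of its nonsingularity on $\partial G$ is just an unpacking of that citation. The only cosmetic difference is that you compute $\deg(Id-h,G,0)=1$ while the paper records the sign-adjusted $\deg(-(Id-h),G,0)$; both yield the required nonvanishing in hypothesis (b) of the Continuation Theorem.
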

\begin{proof}
The fixed point set $\fix(h)$ is compact iff there is $M>0$ such that $\fix(h)\subset B(M)$. Choose $R\geqslant M$ according to \eqref{nonsingular} and \eqref{negativ}. Following the scheme presented in the proof of Theorem \ref{th2}., take $x\in\overline{\Omega}$ such that
\[\begin{cases}
\dot{x}(t)=-\lambda W_V(x(t)),&t\in I,\\
x(0)=\frac{1}{T}\int_0^Th(x(t))\,dt,
\end{cases}\]
for some $\lambda\in(0,1)$. We know already that $x(t)\not\in\partial G$ for $t\in(0,T]$. Suppose $|x(0)|>|x(t)|$ for $t\in(0,T]$. Then
\[|x(0)|=\frac{1}{T}\int_0^T|x(0)|\,dt>\frac{1}{T}\int_0^T|x(t)|\,dt\geqslant\frac{1}{T}\int_0^T|h(x(t))|\,dt\geqslant\left|\frac{1}{T}\int_0^Th(x(t))\,dt\right|=|x(0)|.\] Therefore there exists $t_0\in(0,T]$ such that $|x(0)|\<|x(t_0)|$. Since $V$ is monotone we have $V(x(0))\<V(x(t_0))<r$, i.e. $x(0)\not\in\partial G$. This proves that $x\not\in\partial\Omega$ resulting in: $Lx\neq\lambda N_{(-W_V)}(x)$ for every $x\in\dom L\cap\partial\Omega$ and $\lambda\in(0,1)$. \par Condition $0\not\in QN_{(-W_V)}(\ker L\cap\partial\Omega)$ is equivalent to $x_0\neq\frac{1}{T}\int_0^Th(i(x_0)(t))\,dt$, i.e. $x_0\neq h(x_0)$ for $x_0\in\partial G$. The latter requirement is fulfilled in our case, because \[x_0\in\partial G\Rightarrow |x_0|>R\Rightarrow x_0\not\in\overline{B}(M)\Rightarrow x_0\not\in\fix(h)\Rightarrow x_0\neq h(x_0).\]
\par Take $x_0\in\partial G$. Then $|x_0-h(x_0))|>0$ and $|h(x_0)|\<|x_0|$. Thus $|x_0-h(x_0))-x_0|=|h(x_0)|<|x_0-h(x_0)|+|x_0|$. The last inequality is an equivalent formulation of the Poincar\'e-Bohl theorem (\cite[Th.2.1.]{zabreiko}), which means that
$\lambda(Id-h)(x_0)+(1-\lambda)Id(x_0)\neq 0$ for every $(x_0,\lambda)\in\partial G\times[0,1]$, i.e. vector fields $Id-h$ and $Id$ are joined by the linear homotopy, which has no zeros on the boundary $\partial G$. Apply Continuation Theorem to see that
\begin{equation}\label{warnonzero2}
\begin{split}
\deg(Id-P&-(\Phi Q+K_{P,Q})N_{(-W_V)},\Omega,0)=\deg(Id-P-\Phi QN_{(-W_V)},\Omega,0)\\&=\deg(-\Phi QN_{(-W_V)},\ker L\cap\Omega,0)=\deg(-(Id-h),G,0)\\&=(-1)^{N+1}\deg(Id,G,0)=(-1)^{N+1}\neq 0,
\end{split}
\end{equation}
as $0\in B(R)\subset G$.\par Following proof of Theorem \ref{th2}. we consider a solution $x\in\overline{\Omega}$ of the succeeding boundary value problem  
\[\begin{cases}
\dot{x}(t)\in G(t,x(t),\lambda),&\mbox{a.e. }t\in I,\\
x(0)=\frac{1}{T}\int_0^Th(x(t))\,dt,
\end{cases}\]
for some $\lambda\in(0,1]$. The reasoning goes without changes, so that we show that $x(t)\not\in\partial G$ for $t\in(0,T]$. The boundary condition as before implies that $x(0)\not\in\partial G$. Therefore, $x\not\in\partial\Omega$, which means that the homotopy $H$ has no fixed points on $\partial\Omega\times[0,1]$. The homotopy invariance together with \eqref{warnonzero2} entails
\begin{align*}
\deg(Id-P-(\Phi Q+K_{P,Q})N(\cdot,0),\Omega,0)&=\deg(Id-H(\cdot,0),\Omega,0)=\deg(Id-H(\cdot,1),\Omega,0)\\&=\deg(Id-P-(\Phi Q+K_{P,Q})N_{(-W_V)},\Omega,0)\neq 0.
\end{align*}
Summarizing, there exists a coincidence point $x\in\Omega\cap\dom L$ of the operator inclusion $Lx\in N(x,0)$. This point constitutes the solution of the nonlocal Cauchy problem \eqref{mean} with mean value condition.
\end{proof}
\begin{corollary}
Let $h\colon\R{N}\to\R{N}$ be a continuous mapping such that $|h(x)|\<|x|$ for every $x\in\R{N}$. Assume further that the fixed point set of $h$ is compact. If $F\colon I\times\R{N}\map\R{N}$ is a Carath\'eodory map and there exists a monotone coercive weakly positively guiding potential $V\colon\R{N}\to\R{}$ for the map $F$, then the following nonlocal boundary value problem with mean value condition
\[\begin{cases}
\dot{x}(t)\in F(t,x(t)),&\mbox{a.e. }t\in I,\\
x(T)=\frac{1}{T}\int_0^Th(x(t))\,dt&
\end{cases}\]
possesses at least one solution.
\end{corollary}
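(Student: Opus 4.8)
The plan is to deduce this corollary from Theorem \ref{th3} by means of a time-reversal substitution, rather than reproving it from scratch. The obstruction to reusing the $V\mapsto -V$ device employed in the corollary to Theorem \ref{th1} is that here \emph{both} the location of the nonlocal constraint (at $T$ instead of $0$) and the sign of the guiding condition differ from the setting of Theorem \ref{th3}; a single negation of the potential cannot repair both. Reflecting the time variable, however, corrects both defects simultaneously.

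First I would introduce the reflected multimap $\tilde{F}\colon I\times\R{N}\map\R{N}$ by $\tilde{F}(t,x):=-F(T-t,x)$ and, for a candidate solution $x$, the reflected trajectory $y(t):=x(T-t)$. A direct computation shows that $x$ solves the problem in the statement if and only if $y$ solves
\[
\begin{cases}
\dot{y}(t)\in\tilde{F}(t,y(t)),&\mbox{a.e. }t\in I,\\
y(0)=\frac{1}{T}\int_0^T h(y(t))\,dt,
\end{cases}
\]
which is exactly a problem of the form treated in Theorem \ref{th3}. Indeed $\dot{y}(t)=-\dot{x}(T-t)\in -F(T-t,x(T-t))=\tilde{F}(t,y(t))$, while the change of variable $s=T-t$ in the integral turns $\frac{1}{T}\int_0^T h(x(t))\,dt$ into $\frac{1}{T}\int_0^T h(y(s))\,ds$ and sends $x(T)$ to $y(0)$.

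Next I would verify that $\tilde{F}$ inherits the Carath\'eodory property: measurability of $t\mapsto\tilde{F}(t,x)$ follows because $t\mapsto T-t$ is an affine measure-preserving bijection and $A\mapsto -A$ carries closed sets to closed sets; upper semicontinuity in $x$ and the convex compactness of the values are preserved by the homeomorphism $v\mapsto -v$; and $(F_3)$ holds with the integrable majorant $\tilde{\mu}(t):=\mu(T-t)$. It then remains to check that the \emph{same} $V$ is a monotone coercive \emph{weakly negatively} guiding potential for $\tilde{F}$. This is the crux of the argument: given $|x|\geqslant R$ and $t\in I$, apply the weakly positive guiding hypothesis \eqref{weak} for $F$ at the reflected time $T-t\in I$ to obtain some $w\in F(T-t,x)$ with $\langle\nabla V(x),w\rangle\geqslant0$; then $-w\in\tilde{F}(t,x)$ satisfies $\langle\nabla V(x),-w\rangle\leqslant0$, which is precisely \eqref{negativ} for $\tilde{F}$. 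Monotonicity, coercivity, and nonsingularity of $V$ are unaffected by the reflection.

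With these verifications in hand, Theorem \ref{th3} applied to the triple $(\tilde{F},h,V)$ yields a solution $y$ of the reflected problem, and the inverse substitution $x(t)=y(T-t)$ produces the desired solution of the stated mean value problem. The only genuinely delicate point is the sign bookkeeping in the guiding inequality above; the measurability and growth checks for $\tilde{F}$ are routine, and the equivalence of the two boundary value problems is a formal change of variables.
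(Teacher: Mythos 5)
Your time-reversal reduction is correct: the substitution $y(t)=x(T-t)$, $\tilde F(t,x)=-F(T-t,x)$ does turn the stated problem into an instance of Theorem \ref{th3} (the Carath\'eodory properties transfer with $\tilde\mu(t)=\mu(T-t)$, and \eqref{weak} for $F$ becomes \eqref{negativ} for $\tilde F$ with the same $R$, while monotonicity and coercivity of $V$ are untouched). The paper states this corollary without proof, but your argument is precisely the intended reduction, and your observation that simply replacing $V$ by $-V$ would destroy coercivity and monotonicity correctly explains why the reflection in time is the right device here.
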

A cursory look at the proof of Theorem \ref{th2}. and \ref{th3}. leads to the following generalization.
\begin{theorem}\label{th4}
Assume that $F\colon I\times\R{N}\map\R{N}$ is a Carath\'eodory map and there exists a monotone coercive weakly negatively guiding potential $V\colon\R{N}\to\R{}$ for the map $F$. Let $R>0$ be chosen with accordance to \eqref{nonsingular} and \eqref{negativ} and $r>\max\{V(x)\colon |x|\<R\}$. Let $g\colon C(I,\R{N})\to\R{N}$ be a continuous function, which maps bounded sets into bounded sets and satisfies additionally the following conditions:
\begin{itemize}
\item[(i)] $\forall\, x\in\dom L\cap C(I,V^{-1}((-\infty,r]))\;\;\;[x(0)=g(x)]\Rightarrow\exists\,t\in(0,T]\;\;\;|g(x)|\<|x(t)|$,
\item[(ii)] $|g(i(x))|\<|x|\;$ for all $x\in V^{-1}(\{r\})$,
\item[(iii)] $\fix(g\circ i)\cap V^{-1}(\{r\})=\emptyset$.
\end{itemize}
Then the nonlocal Cauchy problem \eqref{nonlocal} possesses at least one solution.
\end{theorem}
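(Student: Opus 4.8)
The plan is to reuse verbatim the machinery developed for Theorems \ref{th2} and \ref{th3}, replacing the explicit computations there by the abstract hypotheses (i)--(iii). Keeping all earlier notation, I set $G:=V^{-1}((-\infty,r))$ and $\Omega:=C(I,G)$; coercivity of $V$ makes $\Omega$ open and bounded, while $r>\max\{V(x)\colon|x|\<R\}$ gives $\overline{B}(R)\subset G$, so that $0\in G$ and $\partial G\subset V^{-1}(\{r\})\cap\{|x|>R\}$. The relevant operator is $N_{(-W_V)}$, and the homotopy is built from $F_V(t,x)=F(t,x)\cap\{y\colon\langle\alpha(x)\nabla V(x),y\rangle\<0\}$ together with $G(t,x,\lambda):=\lambda(-W_V)(x)+(1-\lambda)F_V(t,x)$, exactly as in Theorem \ref{th2}. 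Since the boundary datum is $\gamma=\ev_0-g$, one has $\gamma\circ i=Id-g\circ i$ as a map $\R{N}\to\R{N}$, which is what feeds the finite-dimensional degree.

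First I would check hypothesis (a) of the Continuation Theorem for $N_{(-W_V)}$. Suppose $x\in\overline{\Omega}$ solves $\dot{x}=-\lambda W_V(x)$, $x(0)=g(x)$, for some $\lambda\in(0,1)$. On $\{|x(t)|>R\}$ the strict sign $\langle\nabla V(x(t)),-\lambda W_V(x(t))\rangle<0$ forbids $x(t)\in\partial G$ for $t\in(0,T)$ by the Fermat argument, and the endpoint $t=T$ is excluded by the left-hand derivative estimate, precisely as in Theorem \ref{th2}. The genuinely new point is $t=0$: since $x(0)=g(x)$ and $x\in\dom L\cap C(I,V^{-1}((-\infty,r]))$, hypothesis (i) furnishes $t\in(0,T]$ with $|x(0)|=|g(x)|\<|x(t)|$; monotonicity of $V$ then yields $V(x(0))\<V(x(t))<r$, so $x(0)\notin\partial G$. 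Hence $Lx\neq\lambda N_{(-W_V)}(x)$ on $\dom L\cap\partial\Omega$ for every $\lambda\in(0,1)$.

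Next I would establish hypothesis (b). The condition $0\notin QN_{(-W_V)}(\ker L\cap\partial\Omega)$ is equivalent to $x_0\neq g(i(x_0))$ for $x_0\in\partial G$, which is exactly (iii) because $\partial G\subset V^{-1}(\{r\})$. For the degree I would pass through the diffeomorphism $i$ to reduce $\deg(-\Phi QN_{(-W_V)},\ker L\cap\Omega,0)$ to $\deg(-(Id-g\circ i),G,0)$. Hypotheses (ii) and (iii) then drive the Poincar\'e--Bohl argument of Theorem \ref{th3}: for $x_0\in\partial G$ the linear homotopy $x_0-\lambda g(i(x_0))$ has no zero, since $|g(i(x_0))|\<|x_0|$ rules out $x_0=\lambda g(i(x_0))$ for $\lambda\in(0,1)$, while $\lambda=1$ is excluded by (iii) and $\lambda=0$ by $|x_0|>R$. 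Thus $Id-g\circ i$ is homotopic to $Id$ on $\partial G$ without zeros, and as $0\in G$ the identical computation leading to \eqref{warnonzero2} gives $\deg(-(Id-g\circ i),G,0)=(-1)^{N+1}\neq 0$.

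With both hypotheses of Theorem \ref{conth} verified, the coincidence degree $\deg(Id-P-(\Phi Q+K_{P,Q})N_{(-W_V)},\Omega,0)$ is nonzero. I would then show the homotopy $H(x,\lambda)=Px+(\Phi Q+K_{P,Q})N(x,\lambda)$ has no fixed point on $\partial\Omega\times[0,1]$: the guiding-potential estimate of the type \eqref{warprod} (with reversed sign, as in Theorem \ref{th2}) excludes $x(t)\in\partial G$ for $t\in(0,T]$, and hypothesis (i), applied through the unchanged condition $x(0)=g(x)$, again excludes $t=0$. Homotopy invariance transfers the nonzero degree to $\lambda=0$, producing a coincidence point $x\in\Omega\cap\dom L$ of $Lx\in N(x,0)$, i.e.\ a solution of \eqref{nonlocal}. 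I expect the only delicate part to be the bookkeeping that assigns distinct roles to the three hypotheses: (i) supplies the a priori boundary exclusion at the initial time for both $N_{(-W_V)}$ and $H$, whereas (ii) and (iii) serve exclusively to run the Poincar\'e--Bohl homotopy in the finite-dimensional degree; verifying that these abstract conditions are precisely strong enough to stand in for the explicit estimates of Theorems \ref{th2} and \ref{th3} is the crux.
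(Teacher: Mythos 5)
Your proposal is correct and follows essentially the same route as the paper's own proof: verify hypothesis (a) of the Continuation Theorem via the Fermat/endpoint arguments plus condition (i) at $t=0$, verify hypothesis (b) via (iii) and the Poincar\'e--Bohl homotopy driven by (ii)--(iii) to get $\deg(-(Id-g\circ i),G,0)=(-1)^{N+1}\neq 0$, and then run the homotopy $H$ built from $F_V$ exactly as in Theorem \ref{th2}. Your direct check that $x_0-\lambda g(i(x_0))\neq 0$ on $\partial G\times[0,1]$ is just an unpacked form of the strict triangle inequality the paper feeds into the Poincar\'e--Bohl theorem, so the two arguments coincide.
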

\begin{proof}
Our reasoning will reproduce exactly the proceedings of the proof of Theorem \ref{th2}. Let $x\in\overline{\Omega}$ be such that
\[\begin{cases}
\dot{x}(t)=-\lambda W_V(x(t)),&t\in I,\\
x(0)=g(x),
\end{cases}\]
for some $\lambda\in(0,1)$. We infer in a strictly analogous manner to the proof of Theorem \ref{th2}. that $x(t)\not\in\partial G$ for $t\in(0,T]$. Notice that $\overline{G}=V^{-1}((-\infty,r])$, due to \eqref{nonsingular}. Thus, $x\in\dom L\cap C(I,V^{-1}((-\infty,r]))$. It follows from condition (i) that there exists $t\in(0,T]$ such that $|x(0)|\<|x(t)|$. Since $V$ is monotone we get $V(x(0))\<V(x(t))<r$. Therefore $x(0)\not\in\partial G$ and $x\not\in\partial\Omega$. Consequently, $Lx\neq\lambda N_{(-W_V)}(x)$ for every $x\in\dom L\cap\partial\Omega$ and $\lambda\in(0,1)$. \par Recall that $0\not\in QN_{(-W_V)}(\ker L\cap\partial\Omega)$ is equivalent to $\ev_0(x)\neq g(x)$ for $x\in\ker L\cap\partial\Omega$. In other words: $x_0\neq g(i(x_0))$ for every $x_0\in\partial G=V^{-1}(\{r\})$. Of course, condition (iii) is formulated so that the latter is true. 
\par Take $x_0\in\partial G$. Then $|x_0-g(i(x_0))|>0$, by condition (iii) and $|g(i(x_0))|\<|x_0|$, by condition (ii). Whence $|x_0-g(i(x_0))-x_0|<|x_0-g(i(x_0))|+|x_0|$. The last inequality is a simple reformulation of the Poincar\'e-Bohl condition (\cite[Th.2.1.]{zabreiko}), which means that
$\lambda(Id-g\circ i)(x_0)+(1-\lambda)Id(x_0)\neq 0$ for every $(x_0,\lambda)\in\partial G\times[0,1]$, i.e. vector fields $Id-g\circ i$ and $Id$ are joined by the homotopy, nonsingular on the boundary $\partial G$. Now we can apply Continuation Theorem:
\begin{equation}\label{warnonzero}
\begin{split}
\deg(Id-P&-(\Phi Q+K_{P,Q})N_{(-W_V)},\Omega,0)=\deg(Id-P-\Phi QN_{(-W_V)},\Omega,0)\\&=\deg(-\Phi QN_{(-W_V)},\ker L\cap\Omega,0)=\deg(-(Id-g\circ i),G,0)\\&=(-1)^{N+1}\deg(Id,G,0)=(-1)^{N+1}\neq 0,
\end{split}
\end{equation}
as $0\in B(R)\subset G$.\par The rest of the proof is strictly analogous to the remaining part of the proof of Theorem \ref{th2}. In particular, if $x\in\overline{\Omega}$ is a solution to the problem
\[\begin{cases}
\dot{x}(t)\in G(t,x(t),\lambda),&\mbox{a.e. }t\in I,\\
x(0)=g(x)
\end{cases}\]
for some $\lambda\in(0,1]$, then the only modification would be the use of property (i) to demonstrate that $x(0)\not\in\partial G$. In this way we prove that $x\not\in\partial\Omega$, which means that $x\not\in H(x,\lambda)$ on $\partial\Omega\times[0,1]$. By reffering to the property \eqref{warnonzero} we conclude that
\begin{align*}
\deg(Id-P-(\Phi Q+K_{P,Q})N(\cdot,0),\Omega,0)&=\deg(Id-H(\cdot,0),\Omega,0)=\deg(Id-H(\cdot,1),\Omega,0)\\&=\deg(Id-P-(\Phi Q+K_{P,Q})N_{(-W_V)},\Omega,0)\neq 0.
\end{align*}
The coincidence point $x\in\Omega\cap\dom L$ of the operator inclusion $Lx\in N(x,0)$ determines the solution of the nonlocal Cauchy problem \eqref{nonlocal}.
\end{proof}
\begin{corollary}\label{wn2}
Assume that $F\colon I\times\R{N}\map\R{N}$ is a Carath\'eodory map and there exists a monotone coercive weakly positively guiding potential $V\colon\R{N}\to\R{}$ for the map $F$. Let $R>0$ be chosen with accordance to \eqref{nonsingular} and \eqref{weak} and $r>\max\{V(x)\colon |x|\<R\}$. 
Let $g\colon C(I,\R{N})\to\R{N}$ be a continuous function, which maps bounded sets into bounded sets and satisfies conditions {\em (ii)-(iii)} of Theorem \ref{th4}. along with:
\begin{itemize}
\item[(iv)] $\forall\, x\in\dom L\cap C(I,V^{-1}((-\infty,r]))\;\;\;[x(T)=g(x)]\Rightarrow\exists\,t\in(0,T]\;\;\;|g(x)|\<|x(t)|$.
\end{itemize}
Then the nonlocal boundary value problem
\[\begin{cases}
\dot{x}(t)\in F(t,x(t)),&\mbox{a.e. }t\in I,\\
x(T)=g(x)
\end{cases}\]
possesses at least one solution.
\end{corollary}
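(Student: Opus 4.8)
The plan is to run the argument of Theorem \ref{th4}. almost verbatim, interchanging the roles played by the two endpoints $t=0$ and $t=T$ and replacing the field $-W_V$ by $+W_V$; the latter substitution is forced by the fact that $V$ is now a weakly \emph{positively} (rather than negatively) guiding potential. Concretely, I would fix $R>0$ according to \eqref{nonsingular} and \eqref{weak}, choose $r>\max\{V(x)\colon|x|\<R\}$, put $G:=V^{-1}((-\infty,r))$, $\Omega:=C(I,G)$, and work with $\gamma:=\ev_T-g$, so that the second coordinate of the operator $N$ encodes the condition $x(T)=g(x)$.

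First comes the a priori step. If $x\in\overline{\Omega}$ solves $\dot x(t)=\lambda W_V(x(t))$ for some $\lambda\in(0,1)$, then wherever $|x(t)|>R$ one has $\langle\nabla V(x(t)),\dot x(t)\rangle=\lambda\langle\nabla V(x(t)),W_V(x(t))\rangle>0$, so $V\circ x$ is strictly increasing along such arcs. As in Theorem \ref{th4}., Fermat's lemma rules out $x(t_0)\in\partial G$ for interior $t_0\in(0,T)$, while a right-hand difference quotient at $t=0$ yields $(V\circ x)'(0^+)\<0$, contradicting positivity; hence $x(t)\notin\partial G$ for every $t\in[0,T)$. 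To dispose of the remaining endpoint I would invoke the boundary relation $x(T)=g(x)$ together with condition (iv): it furnishes a time $t$ with $|x(T)|=|g(x)|\<|x(t)|$ and $x(t)\notin\partial G$, whence $V(x(T))\<V(x(t))<r$ by monotonicity of $V$, so $x(T)\notin\partial G$ as well. This proves $x\notin\partial\Omega$, i.e. $Lx\neq\lambda N_{W_V}(x)$ on $\dom L\cap\partial\Omega$ for $\lambda\in(0,1)$.

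The degree computation is then \emph{identical} to that of Theorem \ref{th4}., which is the pleasant point: for a constant $i(x_0)$ one has $\ev_T(i(x_0))=x_0=\ev_0(i(x_0))$, so $\gamma\circ i=Id-g\circ i$ exactly as before. Condition (iii) gives $0\notin QN_{W_V}(\ker L\cap\partial\Omega)$, while conditions (ii)--(iii) feed the Poincar\'e--Bohl theorem to produce a zero-free linear homotopy between $Id-g\circ i$ and $Id$ on $\partial G$, so that
\[
\deg(\Phi QN_{W_V},\ker L\cap\Omega,0)=\deg(-(Id-g\circ i),G,0)=(-1)^{N+1}\neq 0 .
\]
Applying the Continuation Theorem (Theorem \ref{conth}.) and then repeating the auxiliary homotopy of Theorem \ref{th4}. --- now with $F_V(t,x)=F(t,x)\cap\{y\colon\langle\alpha(x)\nabla V(x),y\rangle\geqslant 0\}$ and $G(t,x,\lambda)=\lambda W_V(x)+(1-\lambda)F_V(t,x)$, the sign check again forcing $V$ to increase --- one transfers the nonvanishing degree to $N(\cdot,0)$ and extracts a coincidence point, hence a solution.

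The hard part will be the endpoint bookkeeping hidden in the second paragraph: the guiding inequality only controls $x$ on the half-open set $[0,T)$, so condition (iv) has to be used to locate a \emph{free} time, i.e. one lying in $[0,T)$ rather than the vacuous $t=T$ (for which $|g(x)|\<|x(T)|=|g(x)|$ holds automatically and gives nothing). I would make this explicit, verifying that the time delivered by (iv) actually lands where the guiding argument applies. Equivalently --- and this is conceptually the cleanest route --- one may substitute $\w x(t):=x(T-t)$, converting the problem into the setting of Theorem \ref{th4}. for the reversed data $\w F(t,z):=-F(T-t,z)$ and $\w g:=g\circ\rho$, where $\rho$ is the time-reversal isometry of $C(I,\R{N})$: here $V$ stays monotone and coercive and becomes weakly negatively guiding for $\w F$, conditions (ii)--(iii) are preserved because $\rho\circ i=i$, and (iv) turns into hypothesis (i) of Theorem \ref{th4}., subject to the same endpoint interchange $[0,T)\leftrightarrow(0,T]$.
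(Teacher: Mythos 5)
Your proposal is correct and coincides with what the paper intends: Corollary \ref{wn2} is stated there without proof, and the expected justification is exactly your symmetric rerun of Theorem \ref{th4} with $+W_V$ in place of $-W_V$ and $\gamma=\ev_T-g$, or, equivalently and more cleanly, your time-reversal reduction $\w x(t)=x(T-t)$, $\w F(t,z)=-F(T-t,z)$, $\w g=g\circ\rho$, under which a weakly positively guiding potential becomes weakly negatively guiding for $\w F$, the terminal condition becomes an initial one, and conditions (ii)--(iii) are untouched because $\rho\circ i=i$. Your endpoint remark is moreover a genuine catch rather than mere bookkeeping: as literally printed, condition (iv) is vacuous, since whenever $x(T)=g(x)$ the choice $t=T$ always satisfies $|g(x)|\<|x(T)|=|g(x)|$; the quantifier must read $t\in[0,T)$, mirroring the exclusion of the trivial time $t=0$ in condition (i) of Theorem \ref{th4}, because the guiding inequality only forces $x(t)\notin\partial G$ for $t\in[0,T)$ and (iv) is needed precisely to transfer this to the remaining endpoint $T$. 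With that correction (which your time-reversal dictionary produces automatically, since (i) for $\w g$ translates into existence of $s\in[0,T)$ with $|g(x)|\<|x(s)|$) your argument closes; without it the hypothesis yields no control at $t=T$ and the corollary as stated is not proved.
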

\begin{example}
The mappings, which were used to define nonlocal initial conditions in Theorems \ref{th1}.,\ref{th2}. and \ref{th3}. satisfy conditions {\em (i)-(iii)}. 
\end{example}
\begin{example}
The following requirement
\[\forall\,0\neq x\in\dom L\cap C(I,V^{-1}((-\infty,r]))\;\exists\,t\in(0,T]\;\;\;|g(x)|<|x(t)|.\]
implies conditions {\em (i)-(iii)}.
\end{example}
\begin{example}
If $g\colon C(I,\R{N})\to\R{N}$ is such that $|g(x)|<||x||$ for every $x\neq 0$, then in particular conditions {\em (i)-(iii)} are satisfied.
\end{example}
All previous theorems apply to the cases where the function $g$ satisfies the estimation $|g(x)|\<||x||$ for every $x\in C(I,\R{N})$. The next result is based on such a property of $g$, which prevents the function $g$ from fulfillment of this estimation.
\begin{theorem}\label{th6}
Let $F\colon I\times\R{N}\map\R{N}$ be a Carath\'eodory map. Assume that $g\colon C(I,\R{N})\to\R{N}$ is continuous, maps bounded sets into bounded sets and satisfies the following conditions:
\begin{itemize}
\item[(a)] $\liminf\limits_{||x||\to+\infty}\frac{|g(x)|}{||x||}>1$,
\item[(b)] the counter image $(g\circ i)^{-1}(\{0\})$ is compact and $\deg(g\circ i,B(R),0)\neq 0$, where $(g\circ i)^{-1}(\{0\})\subset B(R)$.
\end{itemize}
Then the solution set $S_{\!F}(g)$ of nonlocal initial value problem \eqref{nonlocal} is nonempty and compact as a subset of $C(I,\R{N})$.
\end{theorem}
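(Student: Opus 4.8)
The plan is to dispense with the guiding potential altogether and argue by means of the Poincar\'e-type operator $P=g\circ S_F\colon\R{N}\map\R{N}$ already used in the proof of Theorem \ref{th9}, where $S_F$ is the admissible, $R_\delta$-valued, upper semicontinuous solution set map of the Cauchy problem \eqref{cauchy}. Because $F$ is Carath\'eodory, the a priori bound $S_F(x_0)\subset\overline{B}(|x_0|+\|\mu\|_1)$ from the proof of Theorem \ref{th9}. holds, $g$ sends bounded sets to bounded sets, and hence $P$ is a compact admissible multivalued vector field on every ball. Since a point $x\in S_{\!F}(g)$ is nothing but a solution lying over some $x_0\in\fix(P)$, it suffices to prove $\fix(P)\neq\emptyset$, and I would do this by showing $\deg(Id-P,B(r),0)\neq0$ for all sufficiently large $r$, working inside the topological degree theory for admissible multivalued vector fields.

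First I would exploit condition (a) to obtain constants $c>1$ and $\rho>0$ with $|g(x)|\geqslant c\|x\|$ whenever $\|x\|\geqslant\rho$, and turn them into an a priori bound for fixed points. If $x_0\in\fix(P)$, say $x_0=g(x)$ with $x\in S_F(x_0)$, then $|x_0|=|x(0)|\leqslant\|x\|$; in the case $\|x\|\geqslant\rho$ one gets $|x_0|=|g(x)|\geqslant c\|x\|\geqslant c|x_0|$, forcing $x_0=0$, while in the case $\|x\|<\rho$ one gets $|x_0|<\rho$. Thus $\fix(P)\subset B(\rho)$, and since condition (a) concerns $g$ alone, the identical dichotomy yields $\fix(g\circ S_{\lambda F})\subset B(\rho)$ uniformly for $\lambda\in[0,1]$. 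Fix now $r>\max\{\rho,R\}$.

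The degree evaluation would proceed in two homotopy steps. Scaling the right-hand side, the admissible compact homotopy $(x_0,\lambda)\mapsto g(S_{\lambda F}(x_0))$ connects $P$ (at $\lambda=1$) to $g\circ S_0=g\circ i$ (at $\lambda=0$, the unique solution of $\dot x=0$, $x(0)=x_0$ being the constant $i(x_0)$); by the uniform bound it has no fixed point on $\partial B(r)$, so $\deg(Id-P,B(r),0)=\deg(Id-g\circ i,B(r),0)$. Since $\|i(x_0)\|=|x_0|$, condition (a) gives $|g(i(x_0))|\geqslant c|x_0|$ for $|x_0|\geqslant\rho$, whence the linear homotopy $s\,Id-g\circ i$, $s\in[0,1]$, satisfies $|s x_0-g(i(x_0))|\geqslant(c-1)|x_0|=(c-1)r>0$ on $\partial B(r)$ and is admissible; therefore $\deg(Id-g\circ i,B(r),0)=\deg(-g\circ i,B(r),0)=(-1)^N\deg(g\circ i,B(r),0)$. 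By excision and condition (b), $\deg(g\circ i,B(r),0)=\deg(g\circ i,B(R),0)\neq0$, so $\deg(Id-P,B(r),0)=(-1)^N\deg(g\circ i,B(R),0)\neq0$, which produces a fixed point of $P$ and thus a solution of \eqref{nonlocal}.

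For the compactness assertion I would reproduce verbatim the argument of the Corollary following Theorem \ref{th9}.: $\fix(P)$ is closed and, by the bound above, contained in $B(\rho)$, hence compact in $\R{N}$; the map $\Psi\colon\fix(P)\map C(I,\R{N})$ given by $\Psi(x_0)=\{x\in S_F(x_0)\colon g(x)=x_0\}$ is upper semicontinuous with compact values, and $S_{\!F}(g)=\Psi(\fix(P))$ is therefore nonempty and compact. The step I expect to require the most care is the verification that $(x_0,\lambda)\mapsto g(S_{\lambda F}(x_0))$ is a genuinely admissible and compact homotopy with fixed points that stay uniformly bounded, which rests on the $L$-compactness and continuous dependence of the solution set map; the conceptually decisive point, however, is the superlinear growth condition (a), which absorbs the identity so that the degree of $Id-P$ is governed entirely by the Brouwer degree of $g\circ i$ furnished by (b).
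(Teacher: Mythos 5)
Your argument is correct, but it takes a genuinely different route from the paper's. The paper stays inside the coincidence-degree framework of Theorem \ref{conth}: it sets $\Omega=C(I,B(M))$ with $M$ furnished by (a), shows that $Lx\in\lambda N_F(x)$ has no solution on $\partial\Omega$ for $\lambda\in(0,1)$ (there $|g(x)|>\|x\|\geqslant|x(0)|$ contradicts $x(0)=g(x)$), checks $0\notin QN_F(\ker L\cap\partial\Omega)$, and computes $\deg(\Phi QN_F,\ker L\cap\Omega,0)=\deg(Id-g\circ i,B(M),0)=\pm\deg(g\circ i,B(R),0)\neq 0$ by the same Poincar\'e--Bohl and excision steps you use; compactness then comes from boundedness, equicontinuity and the convergence theorem rather than from the map $\Psi$. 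You instead collapse everything onto $\R{N}$ via the Poincar\'e operator $P=g\circ S_F$ of Theorem \ref{th9} and compute $\deg(Id-P,B(r),0)$, deforming along $\lambda\mapsto g\circ S_{\lambda F}$, which is the finite-dimensional shadow of the deformation encoded in hypothesis (a) of the Continuation Theorem; your a priori bound $\fix(g\circ S_{\lambda F})\subset B(\rho)$ and the terminal reduction to $\deg(g\circ i,B(R),0)$ are sound. The trade-off is clear: the paper's route needs only the convex-valued degree of the class ${\mathcal M}$ (the values of $N_F$ are convex) and no structural information about the solution map, whereas your route avoids the Fredholm/continuation machinery but leans on two heavier inputs that you rightly flag --- the joint upper semicontinuity and $R_\delta$-structure of $(x_0,\lambda)\map S_{\lambda F}(x_0)$, so that $g\circ S_{\lambda F}$ is an admissible compact homotopy, and the degree theory for admissible \emph{non-convex-valued} multimaps in $\R{N}$ from \cite{gorn2}, which goes beyond the convex-valued degree actually set up in the paper's preliminaries (though it is consistent with the abstract's framing and with the fixed-point argument of Theorem \ref{th9}). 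The sign $(-1)^N$ versus the paper's $(-1)^{N+1}$ is immaterial, since only nonvanishing of the degree is used.
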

\begin{proof}
Let $R>0$ be such that $(g\circ i)^{-1}(\{0\})\subset B(R)$. Property (a) of the mapping $g$ means that there is $M>0$ such that $|g(x)|>||x||$ for all $||x||\geqslant M$. Assume that $M\geqslant R$ and put $\Omega:=C(I,B(M))$. Define $N_F\colon\overline{\Omega}\map Z$, by \[N_F(x):=\left\{f\in L^1(I,\R{N})\colon f(t)\in F(t,x(t))\mbox{ for a.a. }t\in I\right\}\times\left\{\gamma(x)\right\},\] where $\gamma=\ev_0-g$. Taking into account that $QN_F(x)=(0,\gamma(x))$ and $K_{P,Q}N_F(x)(t)=\int_0^tF(s,x(s))\,ds$ we infer that $QN_F$ and $K_{P,Q}N_F$ are compact usc multimaps with compact convex values.\par Suppose $x\in\overline{\Omega}$ is a solution to the following nonlocal Cauchy problem
\[\begin{cases}
\dot{x}(t)\in\lambda F(t,(x(t)),&\mbox{a.e. }t\in I,\\
x(0)=g(x)
\end{cases}\]
for some $\lambda\in(0,1)$. From (a) it follows that $||x||<M$. Otherwise $x(0)\neq g(x)$. Thus $x\not\in\partial\Omega$ and inclusion $Lx\in\lambda N_F(x)$ has no solution on $\partial\Omega$ for every $\lambda\in(0,1)$.\par Observe that $0\not\in QN_F(\ker L\cap\partial\Omega)$ is equivalent to $x_0\neq g(i(x_0))$ for every $x_0\in\partial B(M)$. However \[x_0\in\partial B(M)\Rightarrow |x_0|=||i(x_0)||\geqslant M\Rightarrow |g(i(x_0))|>|x_0|\Rightarrow x_0\neq g(i(x_0)).\]
\par Take $x_0\in\partial B(M)$. Then $|x_0|<|g(i(x_0))|$. In other words $|x_0-g(i(x_0))-(-g(i(x_0)))|<|-g(i(x_0))|$. It follows from a corollary to the theorem of Poincar\'e-Bohl (\cite[Th.2.3.]{zabreiko}) that vector fields $Id-g\circ i$ and $-g\circ i$ are mutually homotopic. Using standard properties of the Brouwer degree we see that 
\begin{align*}
\deg(id-g\circ i,B(M),0)&=\deg(-g\circ i,B(M),0)=(-1)^{N+1}\deg(g\circ i,B(M),0)\\&=(-1)^{N+1}\deg(g\circ i,B(R),0)\neq 0,
\end{align*}
by (b). Applying the same reasoning as in the proof of Theorem \ref{th2}. we get
\[\deg(\Phi QN_F,\ker L\cap\Omega,0)=\deg(\gamma\circ i,B(M),0)=\deg(Id-g\circ i,B(M),0)\neq 0.\]
In view of Theorem \ref{conth}. there is a solution of the inclusion $Lx\in N_F(x)$ in $\Omega$. This is of course also a solution of the problem \eqref{nonlocal}.\par It is easy to see that compactness of $S_{\!F}(g)$ is equivalent to the existence of a priori bounds on the solutions to \eqref{nonlocal}. Indeed, suppose $S_{\!F}(g)$ is bounded. Then this set must also be equicontinuous in view of $(F_3)$. Therefore it is relatively compact. The closedness of $S_{\!F}(g)$ is a straightforward consequence of Compactness Theorem (\cite[Th.0.3.4.]{aubin}), Convergence Theorem (\cite[Th.1.4.1.]{aubin}) and continuity of $g$. \par Suppose that for every $r>0$ there exists an element $x\in S_{\!F}(g)$ such that $||x||>r$. Choose $r:=M$. Then $|g(x)|>||x||\geqslant|x(0)|$ for some $x\in S_{\!F}(g)$, which yields a contradiction with $x(0)=g(x)$. Therefore the solution set $S_{\!F}(g)$ must be bounded, completing the proof.
\end{proof}
\begin{lemma}\label{lem2}
Suppose $F\colon I\times\R{N}\map\R{N}$ has sublinear growth, i.e. $(F_3)$ is satisfied. Let $x\in AC(I,\R{N})$ be a solution to the following Cauchy problem
\[\begin{cases}
\dot{x}(t)\in F(t,x(t)),&\mbox{a.e. on }I,\\
x(0)=x_0.
\end{cases}\]
Then $|x(t)|>r>0$ for every $t\in I$, provided $|x_0|>e^{||\mu||_1}(r+1)-1$.
\end{lemma}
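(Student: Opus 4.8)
The plan is to convert the differential inclusion into a scalar differential inequality for the function $t\mapsto|x(t)|$ and then integrate it via an integrating factor; this is simply the lower-bound variant of the classical Gronwall argument. First I would observe that since $\dot{x}(t)\in F(t,x(t))$ almost everywhere, condition $(F_3)$ immediately yields the pointwise estimate $|\dot{x}(t)|\<\mu(t)(1+|x(t)|)$ for a.a. $t\in I$. Note also that $(F_3)$ forces $\mu(t)\geqslant 0$ a.e., since (taking $x=0$) it bounds the nonnegative quantity $\sup\{|y|\colon y\in F(t,0)\}$ from above.

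Next I would pass to the real-valued function $\rho(t):=|x(t)|$. Because $x$ is absolutely continuous, the reverse triangle inequality $|\,\rho(t)-\rho(s)\,|\<|x(t)-x(s)|$ shows that $\rho$ is absolutely continuous as well, hence differentiable a.e. The crucial point is the one-sided estimate $\rho'(t)\geqslant-|\dot{x}(t)|$ for a.a. $t$: at points where $x(t)\neq 0$ this follows from $\rho'(t)=\langle x(t),\dot{x}(t)\rangle/|x(t)|$ together with the Cauchy--Schwarz inequality, while at points where $x(t)=0$ the value $\rho(t)=0$ is a minimum of the nonnegative function $\rho$, so $\rho'(t)=0\geqslant-|\dot{x}(t)|$. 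Combining this with the bound from the first step gives the differential inequality $\rho'(t)+\mu(t)(1+\rho(t))\geqslant 0$ a.e., equivalently $\frac{d}{dt}(1+\rho(t))+\mu(t)(1+\rho(t))\geqslant 0$.

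Setting $\phi(t):=1+\rho(t)=1+|x(t)|$ and multiplying by the integrating factor $\exp\!\big(\int_0^t\mu(s)\,ds\big)$, I would conclude that the product $t\mapsto\phi(t)\exp\!\big(\int_0^t\mu(s)\,ds\big)$ is nondecreasing on $I$. Evaluating at $0$ therefore gives $\phi(t)\geqslant\phi(0)\exp\!\big(-\int_0^t\mu(s)\,ds\big)\geqslant(1+|x_0|)e^{-||\mu||_1}$, that is, $|x(t)|\geqslant(1+|x_0|)e^{-||\mu||_1}-1$ for every $t\in I$. The hypothesis $|x_0|>e^{||\mu||_1}(r+1)-1$ is exactly equivalent to $(1+|x_0|)e^{-||\mu||_1}-1>r$, so the desired bound $|x(t)|>r$ follows at once.

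The only delicate point is the justification of the one-sided derivative estimate $\rho'(t)\geqslant-|\dot{x}(t)|$, and in particular the handling of the set on which $x(t)=0$; everything else is routine. Once the absolute continuity of $\rho$ and this estimate are in hand, the integrating-factor step and the closing arithmetic are immediate.
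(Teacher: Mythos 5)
Your proof is correct, and it follows the same reverse-Gronwall strategy as the paper: both arguments arrive at the exponential lower bound $|x(t)|+1\geqslant(|x_0|+1)e^{-||\mu||_1}$ and close with the same arithmetic. The only genuine difference is in implementation. You differentiate $\rho(t)=|x(t)|$ directly, which obliges you to justify the one-sided estimate $\rho'(t)\geqslant-|\dot{x}(t)|$; you do this correctly, though the case split at zeros of $x$ is unnecessary, since the reverse triangle inequality $|\rho(t+h)-\rho(t)|\leqslant|x(t+h)-x(t)|$ already gives $|\rho'(t)|\leqslant|\dot{x}(t)|$ at every common point of differentiability. The paper sidesteps this delicate point entirely: it starts from the integral inequality $|x(t)|\geqslant|x(s)|-\int_s^t\mu(\tau)(1+|x(\tau)|)\,d\tau$ and, with $t$ fixed, runs the same logarithmic-derivative (integrating-factor) computation on the auxiliary function $f(s)=|x(t)|+\int_s^t\mu(\tau)(1+|x(\tau)|)\,d\tau+1$ of the \emph{lower} limit $s$, which is absolutely continuous with an explicitly known derivative by the fundamental theorem of calculus. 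Your remark that $(F_3)$ forces $\mu\geqslant 0$ a.e.\ (needed to replace $\int_0^t\mu$ by $||\mu||_1$) is a worthwhile point that the paper leaves implicit. Both arguments are complete.
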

\begin{proof}
Taking into account that $x\in AC(I,\R{N})$ and $|\dot{x}(t)|\<\mu(t)(1+|x(t)|)$ a.e. on $I$, in wiev of $(F_2)$ we have \[|x(t)-x(s)|\<\int_s^t\mu(\tau)(1+|x(\tau)|)\,d\tau\] for $\,0\<s\<t\<T$. Thus \[|x(t)|\geqslant|x(s)|-\int_s^t\mu(\tau)(1+|x(\tau)|)\,d\tau\] for $\,0\<s\<t\<T$. Define $f\colon[0,t]\to\R{}_+$ by \[f(s)=|x(t)|+\int_s^t\mu(\tau)(1+|x(\tau)|)\,d\tau+1.\] Then
\begin{equation}\label{lem1}
f(s)\geqslant|x(s)|+1>0\mbox{ and }f(t)=|x(t)|+1.
\end{equation}
Obviously \[\ln f(t)-\ln f(s)=\int_s^t\frac{f'(\tau)}{f(\tau)}\,d\tau.\] Same time \[f'(s)=-\mu(s)(1+|x(s)|)\] a.e. on $[0,t]$. Therefore 
\begin{align*}
\ln\frac{f(t)}{f(s)}&=\ln f(t)-\ln f(s)=-\int_s^t\frac{\mu(\tau)(1+|x(\tau)|)}{f(\tau)}\,d\tau\overset{\eqref{lem1}}{\geqslant}-\int_s^t\frac{\mu(\tau)(1+|x(\tau)|)}{|x(\tau)|+1}\,d\tau\\&=-\int_s^t\mu(\tau)\,d\tau.
\end{align*}
From here \[f(t)\geqslant f(s)\exp\left(-\int_s^t\mu(\tau)\,d\tau\right)\] which implies \[|x(t)|+1\geqslant(|x(s)|+1)\exp\left(-\int_s^t\mu(\tau)\,d\tau\right)\] for $\,0\<s\<t\<T$, by \eqref{lem1}. Substituting $s=0$ we have \[|x(t)|\geqslant(|x_0|+1)\exp\left(-\int_0^t\mu(\tau)\,d\tau\right)-1\geqslant(|x_0|+1)e^{-||\mu||_1}-1.\] Now the assertion of the lemma is already visible.
\end{proof}
The purpose of the last theorem is to provide the conditions that ensure the compactness of the set of solutions to nonlocal Cauchy problems that were the subject of interest in Theorems \ref{th1}., \ref{th2}. and \ref{th3}.
\begin{theorem}\label{compact}
Assume that $F\colon I\times\R{N}\map\R{N}$ is a Carath\'eodory map. Suppose there exists a monotone coercive strictly negatively guiding potential $V\colon\R{N}\to\R{}$ for the map $F$. Let $R>0$ be chosen with accordance to \eqref{nonsingular} and \eqref{strictneg} and $r>\max\{V(x)\colon |x|\<R\}$. Let $g\colon C(I,\R{N})\to\R{N}$ be a continuous function, which maps bounded sets into bounded sets and satisfies additionally the following conditions:
\begin{itemize}
\item[(i)] $\forall\, x\in AC(I,\R{N})\;\;\;[x(0)=g(x)]\Rightarrow\exists\,t\in(0,T]\;\;\;|g(x)|\<|x(t)|$,
\item[(ii)] $|g(i(x))|\<|x|\;$ for all $x\in V^{-1}(\{r\})$,
\item[(iii)] $\fix(g\circ i)\cap V^{-1}(\{r\})=\emptyset$.
\end{itemize}
Then the solution set $S_{\!F}(g)$ of nonlocal initial value problem \eqref{nonlocal} is nonempty and compact as a subset of $\,C(I,\R{N})$.
\end{theorem}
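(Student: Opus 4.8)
The plan is to separate the statement into the nonemptiness of $S_{\!F}(g)$ and its compactness, and to obtain the first part essentially for free from Theorem \ref{th4}. A strictly negatively guiding potential is in particular weakly negatively guiding: the inequality $\langle\nabla V(x),F(t,x)\rangle^+<0$ forces $\langle\nabla V(x),y\rangle<0$ for \emph{every} $y\in F(t,x)$, so some $y\in F(t,x)$ (the values being nonempty) satisfies $\langle\nabla V(x),y\rangle\leqslant 0$, and the same $R$ serving in \eqref{strictneg} serves in \eqref{negativ}. Conditions (ii) and (iii) here coincide verbatim with those of Theorem \ref{th4}, while condition (i) of the present theorem is quantified over all of $AC(I,\R{N})=\dom L$ and is therefore formally stronger than condition (i) of Theorem \ref{th4}, which ranges only over $\dom L\cap C(I,V^{-1}((-\infty,r]))$. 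Hence every hypothesis of Theorem \ref{th4} is met and $S_{\!F}(g)\neq\emptyset$.

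For compactness I would reuse the scheme from the end of the proof of Theorem \ref{th6}: compactness of $S_{\!F}(g)$ is equivalent to the existence of an a priori bound on its elements. Once $S_{\!F}(g)$ is shown to be bounded, the growth condition $(F_3)$ makes it equicontinuous and the Arzel\`a--Ascoli theorem yields relative compactness, while closedness follows from the Compactness and Convergence Theorems together with the continuity of $g$, exactly as in Theorem \ref{th6}. The whole matter thus reduces to producing a uniform bound on $\|x\|$ for $x\in S_{\!F}(g)$.

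To establish that bound I would first control the initial value $x(0)=g(x)$. Put $\Lambda:=e^{\|\mu\|_1}(R+1)-1$ and suppose, toward a contradiction, that some $x\in S_{\!F}(g)$ has $|x(0)|>\Lambda$. By Lemma \ref{lem2}, applied with the threshold $R$ in place of its generic radius, this forces $|x(t)|>R$ for every $t\in I$, so the strict guiding inequality \eqref{strictneg} is available on the whole interval and gives $(V\circ x)'(t)=\langle\nabla V(x(t)),\dot x(t)\rangle\leqslant\langle\nabla V(x(t)),F(t,x(t))\rangle^+<0$ for almost every $t\in I$; thus $V\circ x$ is strictly decreasing on $I$. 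On the other hand, since $x(0)=g(x)$ and $x\in AC(I,\R{N})$, condition (i) supplies $t\in(0,T]$ with $|x(0)|=|g(x)|\leqslant|x(t)|$, whence $V(x(0))\leqslant V(x(t))$ by monotonicity of $V$ — in direct conflict with $V(x(t))<V(x(0))$. Therefore $|x(0)|\leqslant\Lambda$ for every solution, and a standard Gr\"onwall estimate of the type underlying Lemma \ref{lem2}, namely $|x(t)|+1\leqslant(|x(0)|+1)e^{\|\mu\|_1}$, bounds $\|x\|$ uniformly in terms of $\Lambda$ and $\|\mu\|_1$.

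The heart of the argument, and the step I expect to need the most care, is the interplay of the three ingredients in the previous paragraph. Lemma \ref{lem2} guarantees that a large initial value keeps the entire trajectory outside $\overline{B}(R)$, which is exactly what permits the \emph{strict} inequality \eqref{strictneg} — rather than the merely weak property used for existence — to be exploited uniformly in $t$, forcing $V\circ x$ to be genuinely decreasing; and the strengthening of condition (i) to all of $AC(I,\R{N})$ is precisely what makes it applicable to an arbitrary solution whose range need not lie in $V^{-1}((-\infty,r])$. Monotonicity of $V$ then converts the geometric inequality $|x(0)|\leqslant|x(t)|$ into $V(x(0))\leqslant V(x(t))$, producing the contradiction. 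The only delicate routine points are the a.e.\ differentiation of $V\circ x$ along an absolutely continuous solution, the identity $(V\circ x)'=\langle\nabla V(x),\dot x\rangle$ a.e., and verifying that $\Lambda$ matches the hypothesis of Lemma \ref{lem2}.
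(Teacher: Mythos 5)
Your proposal is correct and follows essentially the same route as the paper: nonemptiness via Theorem \ref{th4} (noting that strict negative guidance implies weak negative guidance and that condition (i) here is formally stronger), reduction of compactness to an a priori bound as in Theorem \ref{th6}, and the bound on $|x(0)|$ obtained by combining Lemma \ref{lem2}, the strict inequality \eqref{strictneg} forcing $V\circ x$ to decrease, monotonicity of $V$, and condition (i), followed by a Gr\"onwall estimate. The only cosmetic difference is that you invoke condition (i) directly to produce the contradictory $t$, whereas the paper phrases the same step as a two-case dichotomy.
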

\begin{proof}
That the set $S_{\!F}(g)$ is not empty results from Theorem \ref{th4}. Just as in the proof of Theorem \ref{th6}. it suffices to show the boundedness of the solution set $S_{\!F}(g)$ to be certain of its compactness.\par Take $x\in S_{\!F}(g)$ and suppose that $|x(0)|>e^{||\mu||_1}(R+1)-1$. From Lemma \ref{lem2}. it follows that $|x(t)|>R$ for every $t\in I$. Assume that $|x(0)|\<|x(t_0)|$ for some $t_0\in(0,T]$. Then $V(x(t_0))-V(x(0))\geqslant 0$, due to the monotonicity of $V$. On the other hand $V(x(t_0))-V(x(0))=\int_0^{t_0}\langle\nabla V(x(s)),\dot{x}(s)\rangle\,ds<0$, because \[\langle\nabla V(x(s)),\dot{x}(s)\rangle\<\langle\nabla V(x(s)),F(s,x(s))\rangle^+<0\;\;\mbox{ for a.a. }s\in[0,t_0].\] We arrive at contradiction. If we assume the opposite, i.e. $|x(0)|>|x(t)|$ for every $t\in(0,T]$, then condition (i) is contradicted. Therefore it is impossible that the supposition $|x(0)|>e^{||\mu||_1}(R+1)-1$ was correct.\par The growth condition $(F_3)$ along with the Gronwall inequality implies \[|x(t)|\<\left[||\mu||_1+(R+1)e^{||\mu||_1}-1\right]e^{\int_0^t\mu(s)\,ds}\;\;\mbox{ for }t\in I.\] Thus $S_{\!F}(g)\subset\overline{B}(M)$, where
\[M:=\left[||\mu||_1+(R+1)e^{||\mu||_1}-1\right]e^{||\mu||_1}.\]
\end{proof}

\end{document}